\DeclareMathAlphabet{\mathbbmsl}{U}{bbm}{m}{sl}
\def\x{{\bf{x}}}
\def\c{\mathcal{C}}
\newtheorem{theorem}{Theorem}[section]
\newtheorem{example}[theorem]{Example}
\newtheorem{remark}[theorem]{Remark}
\newtheorem{lemma}[theorem]{Lemma}
\newtheorem{definition}[theorem]{Definition}
\newtheorem{proposition}[theorem]{Proposition}
\begin{document}
\title{\large \textbf{On the Finiteness Property of the Polynomial Complementarity Problem}}
\author{Sonali Sharma$^{a,1}$, V. Vetrivel$^{a,2}$\\
{\small$^{a}$Department of Mathematics, Indian Institute of Technology Madras, Chennai, India}\\
{\small $^{1}$Email: ma24r005@smail.iitm.ac.in}\\
{\small $^{2}$Email: vetri@iitm.ac.in}\\}
\date{}
\maketitle

\begin{abstract}
This paper explores the finiteness of the solution set of the polynomial complementarity problem (PCP). To achieve this goal, we introduce two new classes of structured tensor tuples, namely the nondegenerate tensor tuple and the strong nondegenerate tensor tuple, as a generalization of nondegenerate tensors, and discuss their properties and interconnections. We investigate the finiteness of the solution set  of the PCP  in the context of these structured tensor tuples and establish a sufficient condition that guarantees a finite solution set. As a consequence, we establish a result related to the finiteness of the solution set of tensor complementarity problems. 
\end{abstract}
\textbf{{Keywords:}} Polynomial complementarity problem,  Non-degenerate tensor tuple, Strong non-degenerate tensor tuple, Finiteness property, Complementarity problem.\\
\textbf{Mathematics subject classification:} 90C33, 90C30, 65H10. 

\section{Introduction}
For a given  (nonlinear) map $F: \mathbb{R}^n \to \mathbb{R}^n$ and a vector ${\bf q} \in \mathbb{R}^{n}$, the  nonlinear complementarity problem (NCP) seeks a vector $\mathbf{x} \in \mathbb{R}^n$ satisfying
\begin{equation}\label{NCP}
\mathbf{x} \geq {\bf 0},~ F(\mathbf{x}) + {\bf q} \geq {\bf 0},~ \text{and}~\mathbf{x}^{T}(F(\mathbf{x})+{\bf q})  = 0.
\end{equation}
This problem reduces to the linear complementarity problem (LCP), when the map in Eq. (\ref{NCP}) is a linear function given by ${\bf A}{\bf x}$, for a real square matrix ${\bf A}$ of order $n$. The nonlinear and linear complementarity problems (NCP and LCP) have been widely studied in the optimization literature due to their rich theory, algorithmic development, and wide-ranging applications in areas such as economic equilibrium, mathematical programming, traffic flow, and contact mechanics; see \cite{MR3396730,MR1955649,MR3896653} and references therein.
 
In Eq. (\ref{NCP}), if $F({\bf x}) = {\mathcal A}{\bf x}^{m-1}$, where $\mathcal{A}$ is a real tensor of order $m$ and dimension $n$ with entries as 
$$\mathcal{A} = (a_{i_{1}i_{2}...i_{m}}),~\text{for all}~1 \leq i_{1},i_{2},...i_{m} \leq n,$$
then the NCP reduces to the tensor complementarity problem (TCP). Here $\mathcal{A}{\bf x}^{m-1}$ is a homogeneous polynomial of degree $m-1$ and its $i$-th component is given as
$$(\mathcal{A}{\bf x}^{m-1})_{i} = \displaystyle{\sum_{i_{2},...,i_{m} =1}^n}{a_{ii_{2}...i_{m}}x_{i_{2}}...x_{i_{m}}}.$$
As a natural generalization of the LCP and a special case of the NCP, the TCP has been the subject of extensive research, especially in the last decade, with significant progress regarding the properties of the solution set, such as existence, uniqueness, finiteness \cite{MR4310678,MR3513266,MR3341670,sharma2023criterion}, solution methods and applications \cite{MR3613565,MR3989294,MR4023437,MR3998357,MR3513267}. Many generalizations of the TCP and their applications have also been studied in the literature, see, for instance \cite{jia2024,li2025extended,MR4576574,Yadav30082024,LI2026116873}.  

In recent years, the polynomial complementarity problem (PCP), which subsumes the  TCP  as a special case, has drawn increasing attention in the optimization community, after it was introduced by Gowda \cite{Gowda}. The PCP is  defined as finding a vector ${\bf x} \in \mathbb{R}^{n}$ such that
\begin{equation*}\label{PCP11}
\mathbf{x} \geq {\bf 0},~ \sum_{k=1}^{m-1} \mathcal{A}_k \mathbf{x}^{m-k} + \mathbf{q} \geq {\bf 0},~\text{and} ~\mathbf{x}^{T}(\sum_{k=1}^{m-1} \mathcal{A}_k \mathbf{x}^{m-k}+{\bf q}) = 0,
\end{equation*}
where each term $\mathcal{A}_{i}{\bf x}^{m-i}$ is a homogeneous polynomial of degree $m-i$, and corresponds to a real tensor $\mathcal{A}_{i}$ of order $m-(i-1)$ and dimension $n$. It has been shown that various classes of structured tensor tuples play an important role in the study of the properties of the solution set of the PCP. For instance, $Q$-tensor tuple \cite{shang2025q} provides existence of a solution, $S$-tensor tuple \cite{li2024strict} has been studied with regards to the strict feasibility of the PCP, the uniqueness of the solution has been studied with the help of semi-positive tensor tuple and copositive tensor tuple \cite{shang2023structured}. Moreover, the error bounds \cite{ling2018error}, bounds of the solution set \cite{xu2024bounds,li2024lower} and some generalizations and applications of the PCP \cite{hieu2020notes,shang2023mixed,zheng2020nonemptiness} have also been studied  in the literature. In particular, Hieu et al. \cite{hieu2020notes}  established a connection between the solution set of the PCP and a corresponding polynomial optimization problem, which provides an application of the PCP in optimization.

Among other properties of the solution set of these models, the finiteness has gained a lot of interest, particularly after the introduction of non-degenerate tensors to study the finiteness of the solution set of the TCP \cite{MR4310678}. Several other models of complementarity problems, such as the horizontal TCP, extended vertical TCP, extended horizontal TCP has been introduced and the finiteness of the corresponding solution sets has been analysed by generalizing the concept  of non-degenerate tensors. See, for example, \cite{li2025extended,LI2026116873,sharma2025extended}.

Inspired by the aforementioned works, a natural question arises: can the finiteness results for the solution set be extended to the PCP? Specifically, by introducing new classes of structured tensor tuples, can we analyse the finiteness of the solution set of the PCP? To explore this direction, we propose the notions of non-degenerate and strong non-degenerate tensor tuples (see Definitions \ref{non-degenerate tensor tuple} and \ref{strong non-degenerate tensor tuple}), as a generalization of non-degenerate tensors. With the help of these structured tensor tuples, we investigate the finiteness of the solution set of the PCP (see Section \ref{Section 4}), and provide a sufficient condition for the finiteness of the solution set of the PCP (refer to Theorem \ref{finiteness}). Consequently,  we also derive an equivalence between the finiteness of the solution set of the TCP and non-degenerate tensors,  for the class of matrix based tensors (see Theorem \ref{matrix based 1}), which is different from the class of row-diagonal tensors (see \cite[Theorem 3.1]{MR4310678}). 

The outline of this paper is as follows: In Section \ref{Section 2}, we recall some definitions and results, which will be useful in the sequel. In Section \ref{Section 3}, we introduce the non-degenerate tensor tuple and strong non-degenerate tensor tuple, and discuss their interconnections with non-degenerate tensors and some properties of these tensor tuples. Section \ref{Section 4} explores the finiteness of the solution set of the PCP with regards to these structured tensor tuples, and Section \ref{Section 5} establishes an equivalence between non-degenerate tensors and the finiteness of the solution set of the TCP, for the class of matrix based tensors. The conclusions are drawn in Section \ref{Section 6}.  

\section{Preliminaries and Notation}\label{Section 2}
The following notation will be used throughout the paper.
\begin{enumerate}
\item The $n$-dimensional Euclidean space with the usual inner product is denoted by $\mathbb{R}^{n},$ and elements of $\mathbb{R}^{n}$ are denoted by small bold letters, such as ${\bf x},{\bf y}$.
\item  For any natural number $n$, $[n]$ denotes the set $\{1,2,...,n\}.$
\item For any ${\bf x}, {\bf y}$ in $\mathbb{R}^{n},$ $({\bf x}*{\bf y})_{i}= (x_{i}y_{i}),$ for all $i \in [n]$. The bold vector ${\bf 0}$ denotes a vector in $\mathbb{R}^{n}$ having each of its entries equal to zero. For any ${\bf x} \in \mathbb{R}^{n}$ and a natural number $k$, ${\bf x}^{[k]}$ is a vector in $\mathbb{R}^{n}$ such that $({\bf x}^{[k]})_{i} = x_{i}^{k},$ for all $i \in [n].$
\item The set of all real square matrices of order $n$ is denoted by $\mathbb{R}^{n \times n}$, and the elements of $\mathbb{R}^{n \times n}$ are denoted as ${\bf A}, {\bf M},$ etc.  
\item  The set $\mathbb{T}(m,n)$ denotes the set of all real tensors of order $m$ and dimension $n$, and the elements of $\mathbb{T}(m,n)$ are denoted by math calligraphic letters, such as $\mathcal{A}_{1}, \mathcal{A}_{2},..$ and ${\mathcal I}$ denotes  the identity tensor.
\item A tensor is said to be a zero tensor if all of its entries are equal to zero.
\item  The set $\mathbb{T}(m,n) \times \mathbb{T}(m-1,n)\times...\times \mathbb{T}(2,n)$ is denoted by ${\Lambda(m,n)}.$
\end{enumerate}
In the following, we recall some definitions and results that will be useful in the sequel.

\begin{definition}\rm \label{LCP}\cite{MR3396730}
Let ${\bf A} \in \mathbb{R}^{n \times n}$ and ${\bf q} \in \mathbb{R}^{n}$. The linear complementarity problem is denoted by $\rm{LCP}({\bf A},{\bf q})$, and it is defined as finding a vector ${\bf x} \in \mathbb{R}^{n}$ such that
\begin{equation}\label{LCP112}
{\bf x} \geq {\bf 0},~{\bf A}{\bf x}+{\bf q} \geq {\bf 0}~\text{and}~{\bf x}^{T}({\bf A}{\bf x}+{\bf q}) =0.
\end{equation}
The collection of all the vectors ${\bf x}$ satisfying Eq. (\ref{LCP112}) is said to be the solution set of the $\rm{LCP}({\bf A},{\bf q})$  and is denoted by $\rm{SOL}({\bf A},{\bf q})$. 
\end{definition}

\begin{definition}\rm \cite[Definition 3.6.1]{MR3396730}
A matrix ${\bf A} \in  \mathbb{R}^{n \times n}$ is said to be a non-degenerate matrix if all of its principal minors are nonzero.
\end{definition}

\begin{definition}\rm \cite{MR3341670}
Let $\mathcal{A} \in \mathbb{T}(m,n)$ and ${\bf q} \in \mathbb{R}^{n}$. The tensor complementarity problem is denoted by $\rm{TCP}(\mathcal{A},{\bf q})$, and defined as finding a vector ${\bf x} \in \mathbb{R}^{n}$ such that
\begin{equation}\label{TCP}
{\bf x} \geq {\bf 0},~\mathcal{A}{\bf x}^{m-1}+{\bf q} \geq {\bf 0}~\text{and}~{\bf x}^{T}(\mathcal{A}{\bf x}^{m-1}+{\bf q}) =0.
\end{equation}
The collection of all the vectors such that Eq. (\ref{TCP}) is satisfied,  is said to be the solution set of the $\rm{TCP}(\mathcal{A},{\bf q})$, and denoted by $\rm{SOL}(\mathcal{A},{\bf q}).$
\end{definition}

\begin{definition}\rm\label{ND Tensor}\cite[Definition 2.1]{MR4310678}
A tensor $\mathcal{A} \in \mathbb{T}(m,n)$ is said to be
\begin{enumerate}
\item a non-degenerate tensor if ${\bf x} * \mathcal{A}{\bf x}^{m-1} ={\bf 0} \implies {\bf x} = {\bf 0}.$
\item  a strong non-degenerate tensor if $({\bf x}-{\bf y}) * (\mathcal{A}{\bf x}^{m-1}-\mathcal{A}{\bf y}^{m-1}) ={\bf 0} \implies {\bf x} = {\bf y}.$
\end{enumerate}
\end{definition}

\begin{definition}\rm \cite{MR3821072}
A tensor $\mathcal{A} \in \mathbb{T}(m,n)$ is said to be an $R_{0}$-tensor, if $\rm{SOL}(\mathcal{A},{\bf 0}) = \{{\bf 0}\}.$
\end{definition}

\begin{definition}\rm \cite[Definition 5.2]{Shao}\label{row-diagonal}
A tensor $\mathcal{A} = (a_{i_{1}i_{2}...i_{m}}) \in \mathbb{T}(m,n)$ is said to be a row-diagonal tensor, if for each $i \in [n]$, we have
$$a_{ii_{2}i_{3}...i_{m}} = {0}~(\text{if}~ i_{2}, i_{3},...,i_{m}~\text{are not all equal}).$$
\end{definition}

\begin{definition} \rm \label{PCP} \cite{Gowda}
Let ${\Theta} = (\mathcal{A}_{1}, \mathcal{A}_{2},...,\mathcal{A}_{m-1}) \in \Lambda(m,n)$ and ${\bf q} \in \mathbb{R}^{n}.$ The polynomial complementarity problem, denoted by $\rm{PCP}({\Theta},{\bf q})$, is to find a vector ${\bf x} \in \mathbb{R}^{n}$ satisfying
\begin{equation}\label{PCP1}
{\bf x} \geq {\bf 0},~{\sum_{k=1}^{m-1}}{\mathcal{A}_{k}}{\bf x}^{m-k} +{\bf q} \geq {\bf 0},~~\text{and}~~{\bf x}^{T}( {\sum_{k=1}^{m-1}}{\mathcal{A}_{k}}{\bf x}^{m-k}+ {\bf q}) = 0. 
\end{equation}
The set of all the vectors ${\bf x}$ satisfying Eq. (\ref{PCP1})  is said to be the solution set of $\rm{PCP}({\Theta},{\bf q})$, and is denoted by  $\mathrm{SOL}({\Theta},{\bf q}).$ We use the notation ${\mathrm{\Psi}({\bf x})}$ to denote ${\sum_{k=1}^{m-1}}{\mathcal{A}_{k}}{\bf x}^{m-k}$ for any ${\bf x} \in \mathbb{R}^{n}.$
\end{definition}

\begin{remark}\rm
It is to be noted that when $\mathcal{A}_{i}~(i = 2,3,...,m-1)$ in ${\Theta}$ are zero tensors, then the $\mathrm{PCP} ({\Theta},{\bf q})$ reduces to the $\mathrm{TCP}(\mathcal{A}_{1},{\bf q}).$  If  $\mathcal{A}_{i}~(i = 1,2,...,m-2)$ in ${\Theta}$ are zero tensors, then the $\mathrm{PCP} ({\Theta},{\bf q})$ reduces to the $\mathrm{LCP}(\mathcal{A}_{m-1},{\bf q}).$
\end{remark}

\begin{theorem}{\rm \cite[Proposition 2.1]{Gowda}}\label{compactness} Let ${\Theta} = (\mathcal{A}_{1}, \mathcal{A}_{2},...,\mathcal{A}_{m-1}) \in \Lambda(m,n)$, and ${\bf q} \in \mathbb{R}^{n}.$ If $\mathcal{A}_{1}$ is an ${R}_{0}$-tensor, then $\rm{PCP}({\Theta},{\bf q})$ has a compact solution set for any ${\bf q} \in \mathbb{R}^{n}.$ 
\end{theorem}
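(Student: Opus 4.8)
The plan is to establish the two defining properties of a compact set---closedness and boundedness---separately; closedness is routine, while boundedness is where the $R_0$ hypothesis on the leading tensor $\mathcal{A}_1$ enters, via a normalization (homogenization) argument.

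First I would note that $\mathrm{SOL}(\Theta,{\bf q})$ is closed, since it is the intersection of the sets $\{{\bf x}\in\mathbb{R}^n:{\bf x}\geq{\bf 0}\}$, $\{{\bf x}\in\mathbb{R}^n:\Psi({\bf x})+{\bf q}\geq{\bf 0}\}$ and $\{{\bf x}\in\mathbb{R}^n:{\bf x}^T(\Psi({\bf x})+{\bf q})=0\}$, each of which is the preimage of a closed set under a continuous (polynomial) map.

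The substantive part is boundedness, which I would prove by contradiction. Suppose there is a sequence ${\bf x}^j\in\mathrm{SOL}(\Theta,{\bf q})$ with $\|{\bf x}^j\|\to\infty$; set ${\bf y}^j:={\bf x}^j/\|{\bf x}^j\|$, and after passing to a subsequence assume ${\bf y}^j\to{\bf y}$ with ${\bf y}\geq{\bf 0}$ and $\|{\bf y}\|=1$. The key point is that $\mathcal{A}_1{\bf x}^{m-1}$ is the unique term of $\Psi$ of top degree $m-1$, whereas for $k\geq 2$ the term $\mathcal{A}_k{\bf x}^{m-k}$ is homogeneous of degree $m-k\leq m-2$. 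Dividing the feasibility inequality $\Psi({\bf x}^j)+{\bf q}\geq{\bf 0}$ by $\|{\bf x}^j\|^{m-1}$, the contribution of $\mathcal{A}_k({\bf x}^j)^{m-k}$ for $k\geq 2$ acquires a factor $\|{\bf x}^j\|^{1-k}\to 0$, and the term ${\bf q}$ a factor $\|{\bf x}^j\|^{1-m}\to 0$, so letting $j\to\infty$ (and using ${\bf y}^j\to{\bf y}$ together with continuity) gives $\mathcal{A}_1{\bf y}^{m-1}\geq{\bf 0}$. Similarly, dividing the complementarity identity $({\bf x}^j)^T(\Psi({\bf x}^j)+{\bf q})=0$ by $\|{\bf x}^j\|^m$ and passing to the limit yields ${\bf y}^T\mathcal{A}_1{\bf y}^{m-1}=0$. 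Hence ${\bf y}\geq{\bf 0}$, $\mathcal{A}_1{\bf y}^{m-1}\geq{\bf 0}$ and ${\bf y}^T\mathcal{A}_1{\bf y}^{m-1}=0$, i.e. ${\bf y}\in\mathrm{SOL}(\mathcal{A}_1,{\bf 0})$; since $\mathcal{A}_1$ is an $R_0$-tensor, this forces ${\bf y}={\bf 0}$, contradicting $\|{\bf y}\|=1$. Thus $\mathrm{SOL}(\Theta,{\bf q})$ is bounded, and being also closed, it is compact.

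The only real obstacle is careful bookkeeping with the degrees of homogeneity under the normalization: one has to verify that every exponent of $\|{\bf x}^j\|$ attached to a non-leading term is strictly negative (namely $1-k<0$ for $2\leq k\leq m-1$ and $1-m<0$ for the ${\bf q}$ term, with the corresponding powers in the complementarity identity after dividing by $\|{\bf x}^j\|^m$), so that only the top-degree tensor $\mathcal{A}_1$ survives in the limit and the limit vector lands precisely in $\mathrm{SOL}(\mathcal{A}_1,{\bf 0})$. I would also note that the argument never uses nonemptiness of $\mathrm{SOL}(\Theta,{\bf q})$, so the statement holds vacuously when the solution set happens to be empty.
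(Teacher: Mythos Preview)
Your argument is correct and is the standard normalization (recession) argument for this type of result. However, note that the paper does not actually supply its own proof of this statement: Theorem~\ref{compactness} is quoted from \cite[Proposition 2.1]{Gowda} in the preliminaries section and is used as a black box (specifically, it is invoked in the proof of Theorem~\ref{finiteness} to guarantee compactness of $\mathrm{SOL}(\Theta,{\bf q})$ so that a convergent subsequence can be extracted). Your proof is precisely the approach one expects in the cited reference: closedness is immediate from continuity, and boundedness follows because dividing by $\|{\bf x}^j\|^{m-1}$ (respectively $\|{\bf x}^j\|^{m}$) isolates the top-degree term $\mathcal{A}_1$ in the limit, forcing the normalized limit point into $\mathrm{SOL}(\mathcal{A}_1,{\bf 0})=\{{\bf 0}\}$. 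The degree bookkeeping you flag is exactly the point of the argument, and you have handled it correctly.
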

To examine the finiteness of the solution set of the PCP, we introduce the concepts of non-degenerate and strong non-degenerate tensor tuples, and explore their relationship with non-degenerate tensors in the following section (see Section \ref{Section 3}). Subsequently, we utilize these structured tensor tuples to analyse the finiteness of the  solution set of the PCP (refer to Section \ref{Section 4}). At last, we provide an equivalent condition for the finiteness of the solution set of the TCP in Section \ref{Section 5}.

\section{Non-degenerate and Strong Non-degenerate Tensor Tuples}\label{Section 3}
In this section, we first introduce the non-degenerate tensor tuple and strong non-degenerate tensor tuple, and then we investigate their relationships with the non-degenerate tensors. 
\begin{definition}\rm \label{non-degenerate tensor tuple}
A tensor tuple ${\Theta} = (\mathcal{A}_{1},\mathcal{A}_{2},...,\mathcal{A}_{m-1}) \in \Lambda(m,n)$ is said to be a non-degenerate tensor tuple if 
\begin{equation*}\label{ND tuple 1}
{\bf x} * {\Psi({\bf x})} = {\bf 0} \implies {\bf x} = {\bf 0}.
\end{equation*}
\end{definition}

\begin{definition}\rm \label{strong non-degenerate tensor tuple}
A tensor tuple ${\Theta} = (\mathcal{A}_{1},\mathcal{A}_{2},...,\mathcal{A}_{m-1}) \in \Lambda(m,n)$ is said to be a strong non-degenerate tensor tuple if 
\begin{equation*}\label{strong ND tuple 1}
({\bf x} - {\bf y}) * ({\Psi({\bf x})}-{\Psi({\bf y})} = {\bf 0} \implies {\bf x} = {\bf y}.
\end{equation*}  
\end{definition}

\begin{remark}\rm
From the above definitions, it is clear that a strong non-degenerate tensor tuple is  non-degenerate.
\end{remark}
  
\begin{remark}\rm\label{aa}
 Note that if $\mathcal{A}_{i}~(i = 2,...,m-1)$ in $\Theta$ are zero tensors, then $\Theta$ is a non-degenerate tensor tuple if and only if $\mathcal{A}_{1}$ is a non-degenerate tensor.
  If $\mathcal{A}_{i}~(i = 1,2,...,m-2)$ in $\Theta$ are zero tensors, then $\Theta$ is a non-degenerate tensor tuple if and only if $\mathcal{A}_{m-1}$ is a non-degenerate matrix. Moreover, if  $\mathcal{A}_{i}~(i = 2,...,m-1)$ in $\Theta$ are zero tensors, then $\Theta$ is a strong non-degenerate tensor tuple if and only if $\mathcal{A}_{1}$ is a strong non-degenerate tensor.
 \end{remark}
 
 \begin{remark}\rm\label{bb} The following observations are in order. 
 \begin{enumerate}
 \item[\rm(i)] Even if all the tensors $\mathcal{A}_{i}~(i =1,2,...,m-1)$ are non-degenerate tensors, ${\Theta} = (\mathcal{A}_{1}, \mathcal{A}_{2},...,\mathcal{A}_{m-1}) \in \Lambda(m,n)$ need not necessarily a non-degenerate tensor tuple (see Example \ref{All ND}).
 \item[\rm(ii)] ${\Theta} = (\mathcal{A}_{1}, \mathcal{A}_{2},...,\mathcal{A}_{m-1}) \in \Lambda(m,n)$ being a non-degenerate tensor tuple need not imply that all the tensors $\mathcal{A}_{i}~(i = 1,2,...,m-1)$ are non-degenerate tensors (refer to Example \ref{theta ND}). 
 \item[\rm(iii)] A  non-degenerate tensor tuple need not be a strong non-degenerate tensor tuple (see Example \ref{theta ND}).
 \end{enumerate}
 \end{remark}

 \begin{example}\rm \label{All ND}
 Let ${\Theta} = (\mathcal{A}_{1},\mathcal{A}_{2}) \in \Lambda(3,2)$, where $\mathcal{A}_{1} = (a^{(1)}_{ijk}) \in \mathbb{T}(3,2)$ such that $a^{(1)}_{111} =1, a^{(1)}_{122} = 1, a^{(1)}_{211} = 1, a^{(1)}_{222} = 1$ and other entries are zero, and $\mathcal{A}_{2} = (a^{(2)}_{ij}) \in \mathbb{T}(2,2)$ with $a^{(2)}_{11} = 1, a^{(2)}_{12} = 1, a^{(2)}_{21} = 0, a^{(2)}_{22} =4$. We show that \rm(i) $\mathcal{A}_{1}$ and $\mathcal{A}_{2}$ are non-degenerate, \rm(ii)  but ${\Theta}$ is not a non-degenerate tensor tuple. For any ${\bf x} = (x_{1},x_{2})^{T} \in \mathbb{R}^{2}$, we have $\mathcal{A}_{1}{\bf x}^{2} = (x_{1}^{2}+x_{2}^{2}, x_{1}^{2}+x_{2}^{2})^{T},~~\mathcal{A}_{2}{\bf x} = (x_{1}+x_{2},4x_{2})^{T},~\text{and}~{\Psi}({\bf x}) = (x_{1}^{2}+x_{2}^{2}+x_{1}+x_{2}, x_{1}^{2}+x_{2}^{2}+4x_{2})^{T}.$
\begin{enumerate}
\item[\rm(i)] Let ${\bf x} \in \mathbb{R}^{2}$ satisfy
${\bf x} * \mathcal{A}_{1}{\bf x}^{2} = {\bf 0}.$ This implies that
$\begin{bmatrix}
x_{1}(x_{1}^{2}+x_{2}^{2})\\
x_{2}(x_{1}^{2}+x_{2}^{2})
\end{bmatrix}  = \begin{bmatrix}
0 \\
0
\end{bmatrix}.$ This gives ${\bf x} = {\bf 0},$ and hence $\mathcal{A}_{1}$ is a non-degenerate tensor. Now for any ${\bf x} \in \mathbb{R}^{2},~{\bf x} * \mathcal{A}_{2}{\bf x} = {\bf 0}$  implies $\begin{bmatrix}
x_{1}(x_{1}+x_{2}) \\ 
4x_{2}^{2}
\end{bmatrix} = \begin{bmatrix}
0 \\
0
\end{bmatrix}.$ This implies that ${\bf x} = {\bf 0}$, and hence $\mathcal{A}_{2}$ is non-degenerate.
\item[\rm(ii)] Let ${\bf x} = (-1,0)^{T} \in \mathbb{R}^{2}.$ It is easy to see that the nonzero ${\bf x}$ satisfies ${\bf x} * {\Psi}({\bf x}) = {\bf 0},$ and hence ${\Theta}$ is not a non-degenerate tensor tuple.
\end{enumerate}
 \end{example}

\begin{example}\rm \label{theta ND}
 Let ${\Theta} = (\mathcal{A}_{1},\mathcal{A}_{2}) \in \Lambda(3,2)$, where $\mathcal{A}_{1} = (a^{(1)}_{ijk}) \in \mathbb{T}(3,2)$ such that $a^{(1)}_{111} =1, a^{(1)}_{122} = 1, a^{(1)}_{211} = 1, a^{(1)}_{222} = 1$ and other entries are zero, and $\mathcal{A}_{2} = (a^{(2)}_{ij}) \in \mathbb{T}(2,2)$ with $a^{(2)}_{11} = 0, a^{(2)}_{12} = 2, a^{(2)}_{21} = 0, a^{(2)}_{22} =0.$ We show that \rm(i) ${\Theta}$ is a non-degenerate tensor tuple, \rm(ii)  but $\mathcal{A}_{2}$ is not non-degenerate, and \rm(iii) ${\Theta}$ is not a strong non-degenerate tensor tuple. For any ${\bf x} = (x_{1},x_{2})^{T} \in \mathbb{R}^{2},$ we have $\mathcal{A}_{1}{\bf x}^{2} = (x_{1}^{2}+x_{2}^{2}, x_{1}^{2}+x_{2}^{2})^{T},~~\mathcal{A}_{2}{\bf x} = (2x_{2},0)^{T},~\text{and}~{\Psi}({\bf x}) = (x_{1}^{2}+x_{2}^{2}+2x_{2}, x_{1}^{2}+x_{2}^{2})^{T}.$ 
\begin{enumerate}
\item[\rm(i)] Let ${\bf x} \in \mathbb{R}^{2}$ satisfy ${\bf x} * {\Psi}({\bf x}) = {\bf 0}.$ This implies that
$$\begin{bmatrix}
x_{1}(x_{1}^{2}+x_{2}^{2}+2x_{2})\\
x_{2}(x_{1}^{2}+x_{2}^{2})
\end{bmatrix} = \begin{bmatrix}
0 \\
0
\end{bmatrix}.$$
By an easy verification, we get ${\bf x} = {\bf 0},$ and hence ${\Theta}$ is a non-degenerate tensor tuple.
\item[\rm(ii)] Note that ${\bf x} = (1,0)^{T} \in \mathbb{R}^{2}$ satisfies ${\bf x} * {\mathcal{A}_{2}}{\bf x} = {\bf 0}.$ Therefore $\mathcal{A}_{2}$ is not a non-degenerate tensor.
\item[\rm(iii)] Let ${\bf x} = (1,1)^{T}$ and ${\bf y} = (-1,1)^{T}$ be in $\mathbb{R}^{2}.$ Then we can see easily that $({\bf x} - {\bf y}) * ({\Psi}({\bf x})-{\Psi}({\bf y})) = {\bf 0}.$ Thus ${\Theta}$ is not a strong non-degenerate tensor tuple. 
\end{enumerate} 
\end{example}

From \cite[Proposition 4.1]{MR4310678}, it is evident that there does not exist an odd ordered strong non-degenerate tensor. However,  a strong non-degenerate tensor tuple need not always be of even order, as demonstrated below.

\begin{example}\rm\label{odd order}
Let ${\Theta} = (\mathcal{A}_{1},\mathcal{A}_{2}) \in \Lambda(3,2)$, where $\mathcal{A}_{1} = (a^{(1)}_{ijk}) \in \mathbb{T}(3,2)$ such that  $a^{(1)}_{211}  =1$ and other entries are zero, and $\mathcal{A}_{2} = (a^{(2)}_{ij}) \in \mathbb{T}(2,2)$ with $a^{(2)}_{11} = 1, a^{(2)}_{12} = 0, a^{(2)}_{21} = 0, a^{(2)}_{22} =1.$ Clearly, ${\Theta}$ is an odd-ordered tensor tuple. We show that ${\Theta}$ is  strong non-degenerate. For any ${\bf x} \in \mathbb{R}^{2}$, we have $\mathcal{A}_{1}{\bf x}^{2} = (0,x_{1}^{2})^{T}, \mathcal{A}_{2}{\bf x} = (x_{1},x_{2})^{T},$ and ${\Psi}({\bf x}) = (x_{1},x_{1}^{2}+x_{2})^{T}$. Let ${\bf x}, {\bf y} \in \mathbb{R}^{2}$ satisfy
\begin{eqnarray*}
\allowdisplaybreaks
& ({\bf x} - {\bf y})*({\Psi}({\bf x})-{\Psi}({\bf y})) = {\bf 0}, \notag \\
\implies & (x_{1}-y_{1})^{2} =0,~(x_{2}-y_{2})((x_{1}^{2}-y_{1}^{2})+(x_{2}-y_{2})) =0.
\end{eqnarray*}
This implies that ${\bf x} = {\bf y}$, and hence ${\Theta}$ is strong non-degenerate. 
\end{example}

\begin{remark}\rm\label{r11} We make the following observations.
\begin{enumerate}
\item[\rm(i)] ${\Theta} = (\mathcal{A}_{1}, \mathcal{A}_{2},...,\mathcal{A}_{m-1}) \in \Lambda(m,n)$ being a strong non-degenerate tensor tuple need not imply that all the tensors $\mathcal{A}_{i}~(i = 1,2,...,m-1)$ are strong non-degenerate tensors (refer to Example \ref{none are strong ND}).
 \item[\rm(ii)]  Even if all the even ordered tensors $\mathcal{A}_{i}~(i =1,2,...,m-1)$ in ${\Theta} = (\mathcal{A}_{1}, \mathcal{A}_{2},...,\\ \mathcal{A}_{m-1}) \in \Lambda(m,n)$ are strong non-degenerate tensors,  ${\Theta}$ is not necessarily a strong non-degenerate tensor tuple (see Example \ref{not strong tuple}). 
 \end{enumerate}
 \end{remark}

\begin{example}\rm\label{none are strong ND}
Let ${\Theta} = (\mathcal{A}_{1},\mathcal{A}_{2},\mathcal{A}_{3}) \in \Lambda(4,3),$ where $\mathcal{A}_{1} = (a^{(1)}_{ijkl}) \in \mathbb{T}(4,3)$ such that $a^{(1)}_{1111}=1, a^{(1)}_{3333} = 1$ and other entries are zero, $\mathcal{A}_{2} = (a^{(2)}_{ijk}) \in \mathbb{T}(3,3)$ with $a^{(2)}_{233} = 1$  and other entries being zero, and $\mathcal{A}_{3} = (a^{(3)}_{ij}) \in \mathbb{T}(2,3)$ such that $a^{(3)}_{22} = 2$ and rest of the entries being zero. We show that \rm(i) ${\Theta}$ is a strong non-degenerate tensor tuple, \rm(ii) but none of the tensors $\mathcal{A}_{1}, \mathcal{A}_{2}, \mathcal{A}_{3}$ are strong non-degenerate. For any ${\bf x} \in \mathbb{R}^{3},$ we have $\mathcal{A}_{1}{\bf x}^{3} = (x_{1}^{3},0,x_{3}^{3})^{T}, \mathcal{A}_{2}{\bf x}^{2} = (0,x_{3}^{2},0)^{T}, \mathcal{A}_{3}{\bf x} = (0,2x_{2},0)^{T},~\text{and}~{\Psi}({\bf x}) = (x_{1}^{3},x_{3}^{2}+2x_{2},x_{3}^{3})^{T}.$
\begin{enumerate}
\item[\rm(i)] Let ${\bf x},{\bf y} \in \mathbb{R}^{3}$ satisfy 
\begin{eqnarray*}
\allowdisplaybreaks
& ({\bf x}-{\bf y})*({\Psi}({\bf x})-{\Psi}({\bf y})) = {\bf 0}, \\
\implies &  \begin{bmatrix}
(x_{1}-y_{1})(x_{1}^{3}-y_{1}^{3})\\
(x_{2}-y_{2})[(x_{3}^{2}+2x_{2})-(y_{3}^{2}+2y_{2})]\\
(x_{3}-y_{3})(x_{3}^{3}-y_{3}^{3})
\end{bmatrix} = \begin{bmatrix}
0 \\0 \\ 0
\end{bmatrix}.
\end{eqnarray*}
A simple calculation yields ${\bf x} = {\bf y}$. Thus ${\Theta}$ is a strong non-degenerate tensor tuple.
\item[\rm(ii)] Note that ${\bf x} = (1,1,1)^{T}$ and ${\bf y} = (1,0,1)^{T}$ satisfy $({\bf x}-{\bf y})* (\mathcal{A}_{1}{\bf x}^{3} - \mathcal{A}_{1}{\bf y}^{3}) = {\bf 0}.$ Hence $\mathcal{A}_{1}$ cannot be a strong non-degenerate tensor. Since a strong non-degenerate tensor is always of even order, $\mathcal{A}_{2}$ is not a strong non-degenerate tensor. Also, by taking ${\bf x} = (1,1,0)^{T}$ and ${\bf y} = (0,1,1)^{T}$, we can see that $({\bf x}-{\bf y})* (\mathcal{A}_{3}{\bf x}-\mathcal{A}_{3}{\bf y}) = {\bf 0},$ and hence $\mathcal{A}_{3}$ is not a strong non-degenerate tensor.   
\end{enumerate}   
\end{example}

\begin{example}\rm\label{not strong tuple}
Let ${\Theta} = (\mathcal{A}_{1},\mathcal{A}_{2},\mathcal{A}_{3}) \in \Lambda(4,2),$ where $\mathcal{A}_{1} = (a^{(1)}_{ijkl}) \in \mathbb{T}(4,2)$ such that $a^{(1)}_{1111} =1, a^{(1)}_{2222}=-1, a^{(1)}_{1222} =1$ and other entries are zero, $\mathcal{A}_{2}  = (a^{(2)}_{ijk}) \in \mathbb{T}(3,2)$ with $a^{(2)}_{111} =1$ and other entries being zero, and $\mathcal{A}_{3} = (a^{(3)}_{ij}) \in \mathbb{T}(2,2)$ such that $a^{(3)}_{11} =1, a^{(3)}_{22}=1$ and other entries are zero. We show that \rm(i) $\mathcal{A}_{1}$ and $\mathcal{A}_{3}$ are strong non-degenerate tensors, \rm(ii) but  ${\Theta}$ is not a strong non-degenerate tensor tuple. For any ${\bf x} \in \mathbb{R}^{2}$, we have $\mathcal{A}_{1}{\bf x}^{3} = (x_{1}^{3}+x_{2}^{3},-x_{2}^{3})^{T}, \mathcal{A}_{2}{\bf x}^{2} = (x_{1}^{2},0)^{T}, \mathcal{A}_{3}{\bf x} = (x_{1},x_{2})^{T},~\text{and}~{\Psi}({\bf x}) = (x_{1}^{3}+x_{2}^{3}+x_{1}^{2}+x_{1},-x_{2}^{3}+x_{2})^{T}.$
\begin{enumerate}
\item[\rm(i)] Let ${\bf x},{\bf y}$ in $\mathbb{R}^{2}$ satisfy $({\bf x}-{\bf y})*(\mathcal{A}_{1}{\bf x}^{3}-\mathcal{A}_{1}{\bf y}^{3}) = {\bf 0}.$ This implies that
$$(x_{1}-y_{1})[(x_{1}^{3}+x_{2}^{3})-(y_{1}^{3}+y_{2}^{3})] =0,~\text{and}~(x_{2}-y_{2})(y_{2}^{3}-x_{2}^{3})=0.$$
By an easy calculation, we get ${\bf x} = {\bf y}$, and hence $\mathcal{A}_{1}$ is a strong non-degenerate tensor. We now show that $\mathcal{A}_{3}$ is strong non-degenerate. Let ${\bf x},{\bf y}$ in $\mathbb{R}^{2}$ satisfy $({\bf x}-{\bf y})*(\mathcal{A}_{3}{\bf x}-\mathcal{A}_{3}{\bf y}) = {\bf 0}.$ This gives $(x_{1}-y_{1})^{2}=0,~\text{and}~(x_{2}-y_{2})^{2}=0$. This implies that ${\bf x} = {\bf y},$ and therefore $\mathcal{A}_{3}$ is  strong non-degenerate.
\item[\rm(ii)] Note that ${\bf x} = (0,1)^{T}$ and ${\bf y} =(0,-1)^{T}$ satisfy $({\bf x}-{\bf y})*({\Psi}({\bf x})-{\Psi}({\bf y}))={\bf 0}.$ Thus ${\Theta}$ is not a strong non-degenerate tensor tuple.
\end{enumerate} 
\end{example}
    
In the following, we show that the principal subtensor tuple of a non-degenerate tensor tuple is non-degenerate. Before proceeding further, we recall the definition of a principal subtensor tuple from \cite{li2024strict}.
\begin{definition}\rm\cite[Definition 4]{li2024strict}
Let ${\Theta} = (\mathcal{A}_{1},\mathcal{A}_{2},...,\mathcal{A}_{m-1}) \in \Lambda(m,n),$ and $I$ be a subset of $[n].$ Then ${\Theta}(I) = (\mathcal{A}_{1}(I), \mathcal{A}_{2}(I),...,\mathcal{A}_{m-1}(I)) \in {\Lambda}(m,\lvert I \rvert)$ is called the principal subtensor tuple of ${\Theta},$ where $\mathcal{A}_{j}(I)$ is a principal subtensor of $\mathcal{A}_{j},$ for each $j \in [m-1].$ 
\end{definition}

\begin{proposition}
Any principal subtensor tuple of a non-degenerate tensor tuple  is non-degenerate.  
\end{proposition}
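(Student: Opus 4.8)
The plan is to prove this by the standard \emph{extension-by-zero} argument, reducing non-degeneracy of the subtuple to non-degeneracy of the full tuple. Let ${\Theta} = (\mathcal{A}_{1},\ldots,\mathcal{A}_{m-1}) \in \Lambda(m,n)$ be non-degenerate, fix $I \subseteq [n]$, and let ${\Theta}(I) = (\mathcal{A}_{1}(I),\ldots,\mathcal{A}_{m-1}(I)) \in \Lambda(m,\lvert I\rvert)$ with associated map ${\Psi}_{I}({\bf u}) = \sum_{k=1}^{m-1}\mathcal{A}_{k}(I){\bf u}^{m-k}$ for ${\bf u}\in\mathbb{R}^{\lvert I\rvert}$. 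Suppose ${\bf u} * {\Psi}_{I}({\bf u}) = {\bf 0}$; the goal is to conclude ${\bf u} = {\bf 0}$. First I would define ${\bf x}\in\mathbb{R}^{n}$ by $x_{i} = u_{i}$ for $i\in I$ and $x_{i} = 0$ for $i\notin I$, and then show that ${\bf x}$ witnesses the defining implication of a non-degenerate tensor tuple for ${\Theta}$.

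The key step is the identity $({\Psi}({\bf x}))_{i} = ({\Psi}_{I}({\bf u}))_{i}$ for every $i\in I$. This is checked term by term over $k$: since $\mathcal{A}_{k}\in\mathbb{T}(m-k+1,n)$, its $i$-th component $(\mathcal{A}_{k}{\bf x}^{m-k})_{i} = \sum_{i_{2},\ldots,i_{m-k+1}=1}^{n} a^{(k)}_{i\,i_{2}\cdots i_{m-k+1}}\,x_{i_{2}}\cdots x_{i_{m-k+1}}$, and because ${\bf x}$ is supported on $I$, only the monomials with all of $i_{2},\ldots,i_{m-k+1}$ in $I$ survive; for $i\in I$ these are exactly the entries of the principal subtensor $\mathcal{A}_{k}(I)$, so $(\mathcal{A}_{k}{\bf x}^{m-k})_{i} = (\mathcal{A}_{k}(I){\bf u}^{m-k})_{i}$. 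Summing over $k$ gives the identity. Consequently, for $i\in I$ we get $x_{i}({\Psi}({\bf x}))_{i} = u_{i}({\Psi}_{I}({\bf u}))_{i} = 0$, while for $i\notin I$ we trivially have $x_{i}({\Psi}({\bf x}))_{i} = 0$. Hence ${\bf x} * {\Psi}({\bf x}) = {\bf 0}$, so non-degeneracy of ${\Theta}$ forces ${\bf x} = {\bf 0}$, and therefore ${\bf u} = {\bf 0}$.

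I do not expect a serious obstacle here; the only point needing care is the index bookkeeping, namely that the constituent tensors $\mathcal{A}_{k}$ have \emph{different} orders $m-k+1$, so the ``restriction commutes with evaluation'' identity must be verified uniformly in $k$ rather than for a single fixed order --- but this is the same computation that underlies the corresponding facts for principal submatrices of non-degenerate matrices and principal subtensors of $R_{0}$-tensors. I would also note in passing that the identical argument, carried out for a pair ${\bf x},{\bf y}\in\mathbb{R}^{n}$ both supported on $I$, shows that any principal subtensor tuple of a \emph{strong} non-degenerate tensor tuple is again strong non-degenerate.
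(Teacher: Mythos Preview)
Your proposal is correct and follows essentially the same extension-by-zero argument as the paper: extend ${\bf u}$ to ${\bf x}\in\mathbb{R}^{n}$ supported on $I$, observe that ${\bf x}*{\Psi}({\bf x})={\bf 0}$, and invoke non-degeneracy of ${\Theta}$. Your write-up is in fact more careful than the paper's, since you verify the identity $({\Psi}({\bf x}))_{i}=({\Psi}_{I}({\bf u}))_{i}$ for $i\in I$ explicitly and handle the coordinates $i\notin I$ separately, whereas the paper simply equates ${\bf y}*{\Psi}({\bf y})$ with ${\bf x}*{\Psi}_{I}({\bf x})$ despite these living in $\mathbb{R}^{n}$ and $\mathbb{R}^{\lvert I\rvert}$ respectively.
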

\begin{proof}
Let ${\Theta} = (\mathcal{A}_{1},\mathcal{A}_{2},...,\mathcal{A}_{m-1}) \in \Lambda(m,n)$ be a non-degenerate tensor tuple and $I$ be a subset of $[n].$ Then the corresponding principal subtensor tuple of ${\Theta}$ is ${\Theta}(I) = (\mathcal{A}_{1}(I),\mathcal{A}_{2}(I),...,\mathcal{A}_{m-1}(I)) \in {\Lambda}(m, \lvert I \rvert).$  Suppose that there exists ${\bf x} \in \mathbb{R}^{\lvert I \rvert}$ such that ${\bf x} * {\Psi}_{I}({\bf x}) ={\bf 0},$ where ${\Psi}_{I}({\bf x}) = {\displaystyle\sum_{k=1}^{m-1}}{\mathcal{A}_{k}}(I){\bf x}^{m-k}.$ Define ${\bf y} \in \mathbb{R}^{n}$ such that $y_{i} = x_{i},$ for all $i \in I$, and zero otherwise. Then, we have
\begin{equation*}\label{pricipal}
\begin{aligned}
{\bf y} * {\Psi}({\bf y}) &= {\bf y}* (\mathcal{A}_{1}{\bf y}^{m-1}+\mathcal{A}_{2}{\bf y}^{m-2}+...+\mathcal{A}_{m-1}{\bf y}) \\
& ={\bf x} * (\mathcal{A}_{1}(I){\bf x}^{m-1}+\mathcal{A}_{2}(I){\bf x}^{m-2}+...+\mathcal{A}_{m-1}(I){\bf x}) \\
&= {\bf x} * {\Psi}_{I}({\bf x})\\
&= {\bf 0}. 
\end{aligned}
\end{equation*}
This implies that ${\bf y} * {\Psi}({\bf y}) = {\bf 0}.$ Since ${\Theta}$ is a non-degenerate tensor tuple, we get ${\bf y} = {\bf 0}$, and hence ${\bf x} = {\bf 0}$. Therefore ${\Theta}(I)$ is non-degenerate. 
\end{proof}

\section{Finiteness Property for the PCP}\label{Section 4}

In this section, we investigate the finiteness property of the non-degenerate tensor tuple ${\Theta}$, and  establish a sufficient condition for the finiteness property of ${\Theta}$, when ${\Theta}$ is strong non-degenerate. Before proceeding further, we formally define the finiteness property for a tensor tuple in the following.

\begin{definition}\rm\label{Finiteness}
A tensor tuple ${\Theta} = (\mathcal{A}_{1},\mathcal{A}_{2},...,\mathcal{A}_{m-1}) \in \Lambda(m,n)$ is said to have the finiteness property if $\rm{SOL}({\Theta},{\bf q})$ is a finite set for all ${\bf q} \in \mathbb{R}^{n}$.
\end{definition}

\begin{theorem}\label{tensors}{\rm \cite[Theorem 3.1]{MR4310678}}
Let $\mathcal{A} \in \mathbb{T}(m,n)$ be an even ordered row-diagonal tensor. Then $\mathcal{A}$ is non-degenerate if and only if $\rm{TCP}({\mathcal{A}},{\bf q})$ has a finite solution set for all ${\bf q} \in \mathbb{R}^{n}.$
\end{theorem}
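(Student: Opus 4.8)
The plan is to prove the two directions of the equivalence separately, using the structure of even-ordered row-diagonal tensors to reduce everything to one-dimensional polynomial equations.

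\textbf{Setup.} Since $\mathcal{A} = (a_{i_1 i_2 \ldots i_m})$ is row-diagonal, for each $i \in [n]$ the $i$-th component of $\mathcal{A}{\bf x}^{m-1}$ depends only on $x_i$; indeed $(\mathcal{A}{\bf x}^{m-1})_i = a_{i i \ldots i}\, x_i^{m-1}$, so writing $d_i := a_{ii\ldots i}$ we have $\mathcal{A}{\bf x}^{m-1} = ( d_1 x_1^{m-1}, \ldots, d_n x_n^{m-1})^T$. Consequently ${\bf x} * \mathcal{A}{\bf x}^{m-1} = (d_1 x_1^{m}, \ldots, d_n x_n^{m})^T$, and since $m$ is even, $x_i^m \geq 0$ with equality iff $x_i = 0$. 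Hence $\mathcal{A}$ is non-degenerate if and only if $d_i \neq 0$ for every $i \in [n]$. This observation is the workhorse for both implications.

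\textbf{($\Rightarrow$) Non-degenerate implies finiteness.} Assume all $d_i \neq 0$. Fix ${\bf q} \in \mathbb{R}^n$ and let ${\bf x} \in \mathrm{SOL}(\mathcal{A},{\bf q})$. The complementarity condition decouples coordinatewise: for each $i$, either $x_i = 0$, or $x_i > 0$ and $d_i x_i^{m-1} + q_i = 0$. In the first case $x_i$ is determined; in the second, $x_i$ is a positive root of the single-variable polynomial $d_i t^{m-1} + q_i = 0$, which (as $d_i \neq 0$) has at most $m-1$ real roots, hence finitely many. Therefore each coordinate $x_i$ ranges over a finite set, so $\mathrm{SOL}(\mathcal{A},{\bf q})$ is finite. (One should note the feasibility constraints $x_i \geq 0$ and $d_i x_i^{m-1} + q_i \geq 0$ only further restrict this finite set.)

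\textbf{($\Leftarrow$) Finiteness implies non-degenerate.} I prove the contrapositive: if some $d_j = 0$, exhibit a ${\bf q}$ for which $\mathrm{SOL}(\mathcal{A},{\bf q})$ is infinite. Choose ${\bf q} = {\bf 0}$. Then for the index $j$ with $d_j = 0$, the $j$-th row gives $(\mathcal{A}{\bf x}^{m-1})_j + q_j = 0$ automatically, so the complementarity and nonnegativity conditions in coordinate $j$ reduce to just $x_j \geq 0$. For all other coordinates set $x_i = 0$, which trivially satisfies their conditions. Thus every vector $t{\bf e}_j$ with $t \geq 0$ lies in $\mathrm{SOL}(\mathcal{A},{\bf 0})$, giving an infinite solution set and contradicting the finiteness property. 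Hence finiteness forces all $d_i \neq 0$, i.e., $\mathcal{A}$ is non-degenerate.

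\textbf{Main obstacle.} There is no deep obstacle here; the only care needed is in the bookkeeping of the coordinatewise decoupling — verifying that row-diagonality genuinely makes the $i$-th component of $\mathcal{A}{\bf x}^{m-1}$ a function of $x_i$ alone, and handling the two branches of the complementarity condition cleanly. The even-order hypothesis enters precisely to ensure $x_i^m = 0 \iff x_i = 0$ (so that non-degeneracy is equivalent to $d_i \neq 0$ for all $i$); without it the "$\Leftarrow$" direction could fail. I would present the equivalence $\mathcal{A}$ non-degenerate $\iff d_i \neq 0\ \forall i$ as a preliminary claim, then the two short arguments above.
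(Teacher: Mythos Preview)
Your argument rests on a misreading of Definition~\ref{row-diagonal}. A row-diagonal tensor has $a_{i i_2 \ldots i_m} = 0$ whenever $i_2,\ldots,i_m$ are not all equal \emph{to one another}; it does \emph{not} require $i_2 = \cdots = i_m = i$. Consequently the $i$-th component of $\mathcal{A}{\bf x}^{m-1}$ is
\[
(\mathcal{A}{\bf x}^{m-1})_i \;=\; \sum_{j=1}^{n} a_{i j j \ldots j}\, x_j^{\,m-1},
\]
not merely $a_{ii\ldots i}x_i^{m-1}$. The paper's own Examples~\ref{row-diagonal ND but not finite} and~\ref{row-diagonal finite but not ND} exhibit row-diagonal tensors with nonzero off-diagonal entries of the form $a_{ijj\ldots j}$, $j\neq i$ (e.g.\ $a^{(2)}_{233}=1$, giving $(\mathcal{A}_2{\bf x}^2)_2 = x_2^2+x_3^2$). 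So the problem does \emph{not} decouple coordinatewise, your preliminary equivalence ``$\mathcal{A}$ non-degenerate $\iff d_i\neq 0$ for all $i$'' is false, and both directions of your argument collapse.

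The correct route is to observe that for a row-diagonal tensor one has $\mathcal{A}{\bf x}^{m-1} = B\,{\bf x}^{[m-1]}$ for the matrix $B=(a_{ijj\ldots j})$. Since $m-1$ is odd, the map ${\bf x}\mapsto{\bf x}^{[m-1]}$ is a sign-preserving bijection on $\mathbb{R}^n$, and one checks that $\mathrm{SOL}(\mathcal{A},{\bf q})$ is in bijection with the LCP solution set $\mathrm{SOL}(B,{\bf q})$, while $\mathcal{A}$ is non-degenerate iff $B$ is a non-degenerate matrix. The equivalence then follows from the classical LCP fact (Lemma~\ref{for LCP}) that $B$ is non-degenerate iff $\mathrm{SOL}(B,{\bf q})$ is finite for every ${\bf q}$. (The present paper does not give a proof of Theorem~\ref{tensors}; it is quoted from \cite{MR4310678}.)
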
 
 
In the following, we investigate whether an analogue of Theorem \ref{tensors} holds for a tensor tuple $\Theta \in \Lambda(m, n)$; that is, whether the equivalence between a non-degenerate tensor tuple and the finiteness property remains valid when all tensors $\mathcal{A}_1, \mathcal{A}_2,...,$ $ \mathcal{A}_{m-1}$ in ${\Theta}$ are row-diagonal and $m$ is even. Note that $\mathcal{A}_{m-1}$ in $\Theta$ is a matrix of order $n$, and therefore it is trivially a row-diagonal tensor. The following examples illustrate that this result need to  be true.

\begin{example}\rm\label{row-diagonal ND but not finite}
Let ${\Theta} = (\mathcal{A}_{1},\mathcal{A}_{2},\mathcal{A}_{3}) \in \Lambda(4,3),$ where $\mathcal{A}_{1} = (a^{(1)}_{ijkl}) \in \mathbb{T}(4,3)$ such that $a^{(1)}_{1111} =1$ and other entries are zero, $\mathcal{A}_{2}  = (a^{(2)}_{ijk}) \in \mathbb{T}(3,3)$ with $a^{(2)}_{222} =1, a^{(2)}_{233} = 1, a^{(2)}_{333} =1, a^{(2)}_{322} =1$ and other entries being zero, and $\mathcal{A}_{3} = (a^{(3)}_{ij}) \in \mathbb{T}(2,3)$ such that $a^{(3)}_{11} =1$ and other entries are zero. It is easy to see that $\mathcal{A}_{1}, \mathcal{A}_{2}$  and $\mathcal{A}_{3}$ are row-diagonal tensors. We show that \rm(i) ${\Theta}$ is a non-degenerate tensor tuple, \rm(ii) but $\rm{SOL}({\Theta},{\bf q})$ is  not a finite set. For any ${\bf x} \in \mathbb{R}^{3}$, we have $\mathcal{A}_{1}{\bf x}^{3} = (x_{1}^{3},0,0)^{T}, \mathcal{A}_{2}{\bf x}^{2} = (0,x_{2}^{2}+x_{3}^{2}, x_{2}^{2}+x_{3}^{2})^{T}, \mathcal{A}_{3}{\bf x} = (x_{1},0,0)^{T},~\text{and}~\Psi({\bf x}) = (x_{1}^{3}+x_{1},x_{2}^{2}+x_{3}^{2},x_{2}^{2}+x_{3}^{2})^{T}.$
\begin{enumerate}
\item[\rm(i)] Let ${\bf x} \in \mathbb{R}^{3}$ satisfy ${\bf x} * {\Psi}({\bf x}) = {\bf 0}.$ This implies that
$$x_{1}(x_{1}^{3}+x_{1}) = 0,~ x_{2}(x_{2}^{2}+x_{3}^{2})=0,~\text{and}~x_{3}(x_{2}^{2}+x_{3}^{2}) = 0.$$
By as easy calculation, we get ${\bf x} = {\bf 0}$, and hence $\Theta$ is a non-degenerate tensor tuple.
\item[\rm(ii)] Take ${\bf q} = (0,-1,-1)^{T}$ in $\mathbb{R}^{3}$. Observe that the vector ${\bf x} = (0,\cos{t},\sin{t})^{T} \in \mathbb{R}^{3}$ satisfies
$${\bf x} \geq {\bf 0},~{\Psi}({\bf x})+{\bf q} \geq {\bf 0}~\text{and}~{\bf x}^{T}({\Psi}({\bf x})+{\bf q}) =0,$$
for any $0 \leq t \leq \frac{\pi}{2}.$ Therefore $\rm{SOL}({\Theta},{\bf q})$ is not a finite set, and hence ${\Theta}$ does not have the finiteness property. 
\end{enumerate} 
\end{example}

\begin{example}\rm\label{row-diagonal finite but not ND}
Let ${\Theta} = (\mathcal{A}_{1},\mathcal{A}_{2},\mathcal{A}_{3}) \in \Lambda(4,2),$ where $\mathcal{A}_{1} = (a^{(1)}_{ijkl}) \in \mathbb{T}(4,2)$ such that $a^{(1)}_{1111} =1, a^{(1)}_{2222}=-1, a^{(1)}_{1222} =1$ and other entries are zero, $\mathcal{A}_{2}  = (a^{(2)}_{ijk}) \in \mathbb{T}(3,2)$ with $a^{(2)}_{111} =1$ and other entries being zero, and $\mathcal{A}_{3} = (a^{(3)}_{ij}) \in \mathbb{T}(2,2)$ such that $a^{(3)}_{11} =1, a^{(3)}_{22}=1$ and other entries are zero. It is easy to see that $\mathcal{A}_{1}, \mathcal{A}_{2}$  and $\mathcal{A}_{3}$ are row-diagonal tensors. We show that \rm(i) ${\Theta}$ is not a non-degenerate tensor tuple, \rm(ii) but $\Theta$ has the finiteness property. For any ${\bf x} \in \mathbb{R}^{2}$, we have $\mathcal{A}_{1}{\bf x}^{3} = (x_{1}^{3}+x_{2}^{3},-x_{2}^{3})^{T}, \mathcal{A}_{2}{\bf x}^{2} = (x_{1}^{2},0)^{T}, \mathcal{A}_{3}{\bf x} = (x_{1},x_{2})^{T},~\text{and}~{\Psi}({\bf x}) = (x_{1}^{3}+x_{2}^{3}+x_{1}^{2}+x_{1},-x_{2}^{3}+x_{2})^{T}.$ 
\begin{enumerate}
\item[\rm(i)] Note that the nonzero vector ${\bf x} = (0,1)^{T} \in \mathbb{R}^{2}$ satisfies ${\bf x} * {\Psi}({\bf x}) = {\bf 0}.$ Thus ${\Theta}$ is not non-degenerate.
\item[\rm(ii)] We now show that ${\Theta}$ has the finiteness property. If $\rm{SOL}({\Theta},{\bf q})$ is empty, then we are done. Suppose that $\rm{SOL}({\Theta},{\bf q}) \neq \emptyset$, and ${\bf x} = (x_{1},x_{2})^{T}$ be in $\rm{SOL}({\Theta},{\bf q}).$ This implies that
\begin{equation}\label{row-diag1}
{\bf x} \geq {\bf 0},~{\bf y}:={\Psi}({\bf x}) +{\bf q} \geq {\bf 0},~\text{and}~{\bf x}^{T}{\bf y} =0.
\end{equation}
We consider the following cases:

\textbf{{Case 1.}} If $x_{1} > 0$ and $x_{2} = 0$, then from Eq. (\ref{row-diag1}), we get $y_{1} = 0$. This gives $x_{1}^{3}+x_{1}^{2}+x_{1}+q_{1} = 0.$ It can be easily verified that  $x_{1}^{3}+x_{1}^{2}+x_{1}+q_{1}$ is a nonzero polynomial for any ${\bf q} \in \mathbb{R}^{2}.$ Since any nonzero polynomial has finitely many zeroes, we get finitely many values of $x_{1}.$

\textbf{{Case 2.}} If $x_{1}=0$ and $x_{2}>0$, then from Eq. (\ref{row-diag1}), we get $y_{2} =0$. This implies that $-x_{2}^{3}+x_{2}+q_{2} = 0,$ and hence we get finitely many values of $x_{2}$ for any ${\bf q} \in \mathbb{R}^{2}.$

\textbf{{Case 3.}} If $x_{1} > 0$ and $x_{2} > 0$, then from Eq. (\ref{row-diag1}), we get $y_{1} = 0$ and $y_{2}=  0$. This implies that 
$$x_{1}^{3}+x_{2}^{3}+x_{1}^{2}+x_{1} + q_{1}=0,~\text{and}~-x_{2}^{3}+x_{2}+q_{2} = 0.$$
From $x_{1}^{3}+x_{2}^{3}+x_{1}^{2}+x_{1} + q_{1}=0$, we get $x_{2} = -(x_{1}^{3}+x_{1}^{2}+x_{1}+q_{1})^{\frac{1}{3}}.$ Substituting this value of $x_{2}$ in $-x_{2}^{3}+x_{2}+q_{2} = 0$, and upon simplifying, we get $p(x_{1}) = 0,$ where  
$$p(x_{1})=(x_{1}^{3}+x_{1}^{2}+x_{1} + q_{1})-(x_{1}^{3}+x_{1}^{2}+x_{1} + (q_{1}+q_{2}))^{3}.$$
Since $p(x_{1})$ is a nonzero polynomial for any value of ${\bf q} \in \mathbb{R}^{2}$, it has finitely many zeroes. Thus we get finitely many values of $x_{1}$, and hence $x_{2}$.\\
From all of the above cases, we get $\rm{SOL}({\Theta},{\bf q})$ is a finite set for any ${\bf q} \in \mathbb{R}^{2}$, and thus ${\Theta}$ has the finiteness property. 
\end{enumerate}
\end{example}

A further question arises that when the involved tensor tuple is strong non-degenerate,  can we get the  finiteness property of the tensor tuple ${\Theta} \in \Lambda(m,n)$.  In the following theorem, we provide a sufficient condition for the finiteness property of the tensor tuple ${\Theta} \in \Lambda(m,n)$. 
\begin{theorem}\label{finiteness}
Let ${\Theta} = (\mathcal{A}_{1}, \mathcal{A}_{2},...,\mathcal{A}_{m-1}) \in \Lambda(m,n),$ ${\bf q} \in \mathbb{R}^{n},$ and $\mathcal{A}_{1}$ be an $R_{0}$-tensor. If ${\Theta}$ is a strong non-degenerate tensor tuple, then ${\Theta}$ has the finiteness property.
\end{theorem}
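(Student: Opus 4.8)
The plan is to combine the compactness guaranteed by Theorem \ref{compactness} with the strong non-degeneracy to rule out a continuum of solutions. Suppose, for contradiction, that $\mathrm{SOL}(\Theta, \mathbf{q})$ is infinite for some $\mathbf{q} \in \mathbb{R}^{n}$. Since $\mathcal{A}_{1}$ is an $R_{0}$-tensor, Theorem \ref{compactness} tells us that $\mathrm{SOL}(\Theta, \mathbf{q})$ is a compact set, so any infinite subset has an accumulation point $\mathbf{x}^{*} \in \mathrm{SOL}(\Theta, \mathbf{q})$; pick a sequence $\{\mathbf{x}^{(k)}\}$ of distinct solutions with $\mathbf{x}^{(k)} \to \mathbf{x}^{*}$. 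The goal is to show that, by a pigeonhole argument on the finitely many index-support patterns, infinitely many of the $\mathbf{x}^{(k)}$ share the same ``active/inactive'' structure with $\mathbf{x}^{*}$ in a way that forces $(\mathbf{x}^{(k)} - \mathbf{x}^{*}) * (\Psi(\mathbf{x}^{(k)}) - \Psi(\mathbf{x}^{*})) = \mathbf{0}$, contradicting strong non-degeneracy (and hence $\mathbf{x}^{(k)} = \mathbf{x}^{*}$).

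The key combinatorial step: for each solution $\mathbf{x}$, partition $[n]$ into $\alpha(\mathbf{x}) = \{i : x_{i} > 0\}$ and its complement; on $\alpha(\mathbf{x})$ the complementarity condition forces $\Psi(\mathbf{x})_{i} + q_{i} = 0$, while on the complement $x_{i} = 0$. There are only finitely many possible index sets, so some set $\alpha$ is realized by infinitely many $\mathbf{x}^{(k)}$; after passing to this subsequence we may also assume (shrinking the neighbourhood of $\mathbf{x}^{*}$) that $\alpha(\mathbf{x}^{*}) \supseteq \alpha$ is compatible. Now for $i \notin \alpha$ we have $x^{(k)}_{i} = 0$, and for $i \in \alpha$ we have $\Psi(\mathbf{x}^{(k)})_{i} = -q_{i} = \Psi(\mathbf{x}^{*})_{i}$ provided $i$ is active for $\mathbf{x}^{*}$ as well — this is where I must be careful, since $\mathbf{x}^{*}$ might have strictly more active coordinates. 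First I would handle the clean case where $\alpha(\mathbf{x}^{(k)}) = \alpha(\mathbf{x}^{*})$ for infinitely many $k$: there, coordinate by coordinate, either $x^{(k)}_{i} - x^{*}_{i} = 0$ (if $i \notin \alpha$) or $\Psi(\mathbf{x}^{(k)})_{i} - \Psi(\mathbf{x}^{*})_{i} = -q_{i} - (-q_{i}) = 0$ (if $i \in \alpha$), so the Hadamard product vanishes identically and strong non-degeneracy gives $\mathbf{x}^{(k)} = \mathbf{x}^{*}$, the desired contradiction. For the remaining case, where every subsequence with constant support has support strictly contained in $\mathrm{supp}(\mathbf{x}^{*})$, I would use that $\mathbf{x}^{*}$ is itself the limit: for $i \in \mathrm{supp}(\mathbf{x}^{*}) \setminus \alpha$ we have $x^{*}_{i} > 0$ but $x^{(k)}_{i} = 0 \to x^{*}_{i}$, which is impossible; hence in fact $\alpha \supseteq \mathrm{supp}(\mathbf{x}^{*})$ eventually, collapsing this to the clean case.

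The main obstacle I anticipate is precisely this reconciliation between the support of the limit point and the (constant) support along the subsequence — making rigorous that the accumulation point cannot have a coordinate that is positive in the limit but zero along infinitely many terms, which is really just continuity of the coordinate projections, but must be stated carefully because a priori the sequence could oscillate among several support patterns all strictly smaller than $\mathrm{supp}(\mathbf{x}^{*})$. Once that is pinned down, the argument reduces to the elementary observation that two distinct solutions sharing the same complementarity support pattern automatically satisfy the hypothesis of strong non-degeneracy, which is the heart of the matter. A cleaner alternative, which I would mention as a fallback, avoids limits entirely: among the (finitely many) support patterns, fix one, say $\alpha$, that is realized by infinitely many solutions; any two such solutions $\mathbf{x}, \mathbf{y}$ satisfy $x_{i} = y_{i} = 0$ for $i \notin \alpha$ and $\Psi(\mathbf{x})_{i} = \Psi(\mathbf{y})_{i} = -q_{i}$ for $i \in \alpha$, so $(\mathbf{x} - \mathbf{y}) * (\Psi(\mathbf{x}) - \Psi(\mathbf{y})) = \mathbf{0}$, whence $\mathbf{x} = \mathbf{y}$ by strong non-degeneracy — contradicting that infinitely many distinct solutions share the pattern $\alpha$. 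This second route does not even need the $R_{0}$ hypothesis or compactness, which suggests $R_{0}$ is included only to guarantee $\mathrm{SOL}(\Theta,\mathbf{q})$ is nonempty and bounded, or to align with a companion statement; in any case the support-pigeonhole argument is the cleanest path and I would present it as the main proof.
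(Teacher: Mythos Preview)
Your fallback argument is correct and is in fact cleaner than both your main approach and the paper's own proof. The paper argues as follows: assuming $\mathrm{SOL}(\Theta,\mathbf{q})$ is infinite, compactness (via the $R_0$ hypothesis and Theorem~\ref{compactness}) yields a sequence of solutions with a convergent subsequence $\mathbf{x}^{(k)}\to\mathbf{x}\in\mathrm{SOL}(\Theta,\mathbf{q})$; then, for each index $i$, a case split on the sign pattern of $\bigl(x_i,(\Psi(\mathbf{x})+\mathbf{q})_i\bigr)$ at the \emph{limit point}, combined with continuity of $\Psi$, forces $(\mathbf{x}^{(k)}-\mathbf{x})_i(\Psi(\mathbf{x}^{(k)})-\Psi(\mathbf{x}))_i=0$ for all large $k$, and strong non-degeneracy gives $\mathbf{x}^{(k)}=\mathbf{x}$ eventually. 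Your fallback bypasses all of this: pigeonholing on the $2^n$ possible supports shows some $\alpha\subseteq[n]$ is the support of two distinct solutions $\mathbf{x},\mathbf{y}$, and for such a pair the Hadamard product vanishes coordinatewise (either $x_i=y_i=0$ or $\Psi(\mathbf{x})_i=\Psi(\mathbf{y})_i=-q_i$), contradicting strong non-degeneracy. This uses no topology, gives the explicit bound $\lvert\mathrm{SOL}(\Theta,\mathbf{q})\rvert\le 2^n$, and, as you correctly note, shows the $R_0$ hypothesis is superfluous for the finiteness conclusion.

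Your first (limit-based) approach is in the spirit of the paper's but is overcomplicated by the attempt to match the support along the subsequence to that of the limit. The step ``collapsing this to the clean case'' is not quite right: from $\mathrm{supp}(\mathbf{x}^*)\subseteq\alpha$ you do not get $\alpha=\mathrm{supp}(\mathbf{x}^*)$, since an index $i\in\alpha\setminus\mathrm{supp}(\mathbf{x}^*)$ can occur (with $x^{(k)}_i>0$ for all $k$ but $x^{(k)}_i\to 0$). The fix is exactly what the paper does: for such $i$, continuity of $\Psi$ gives $(\Psi(\mathbf{x}^*)+\mathbf{q})_i=\lim_k(\Psi(\mathbf{x}^{(k)})+\mathbf{q})_i=0$, so the $i$th product still vanishes. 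But once you invoke continuity you are essentially reproducing the paper's three-case argument, and the extra pigeonhole layer on the sequence adds nothing. Present the fallback as the main proof.
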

\begin{proof}
Given that ${\Theta} = (\mathcal{A}_{1}, \mathcal{A}_{2},...,\mathcal{A}_{m-1}) \in \Lambda(m,n),$ and  ${\bf q} \in \mathbb{R}^{n}.$ If $\rm{SOL}({\Theta},{\bf q})$ is empty, then there is nothing to prove. Assume that  $\rm{SOL}({\Theta},{\bf q}) \neq \emptyset$ and an infinite set for some ${\bf q} \in \mathbb{R}^{n}.$ Let $\{{\bf x}^{(k)}\}_{k =1}^{\infty}$ be a sequence in  $\rm{SOL}({\Theta},{\bf q})$. Since $\mathcal{A}_{1}$ is an ${R}_{0}$-tensor, therefore from Theorem \ref{compactness}, it follows that $\rm{SOL}({\Theta},{\bf q})$ is a compact set. So there exists a convergent subsequence of  $\{{\bf x}^{(k)}\}$ which converges to some ${\bf x} \in \rm{SOL}({\Theta},{\bf q}).$  Assume (without loss of generality) that the convergent subsequence of $\{{\bf x}^{(k)}\}$ is itself. We claim that
$$({\bf x}^{(k)}-{\bf x})* ({\Psi}({\bf x}^{(k)})-{\Psi}({\bf x})) = 0,~~\text{for all}~k \geq {k_{0}}.$$
We consider the following cases:\\
\textbf{{Case 1.}} When $x_{i} = 0$ and $({\Psi}({\bf x})+{\bf q})_{i} = 0.$ Then for all $k \geq 1$, we obtain 
$$({\bf x}^{(k)}-{\bf x})_{i}({\Psi}({\bf x}^{(k)})-{\Psi}({\bf x}))_{i}=({\bf x}^{(k)}-{\bf x})_{i} [({\Psi}({\bf x}^{(k)}) + {\bf q})-({\Psi}({\bf x})+{\bf q})]_{i} = 0.$$
\textbf{{Case 2.}} When $x_{i} = 0$ and $({\Psi}({\bf x})+{\bf q})_{i} > 0.$ Since $\{{\bf x}^{(k)}\}$ converges to ${\bf x}$ and ${\Psi}({\bf x})+{\bf q}$ is a continuous function, $\{{\Psi}({\bf x}^{(k)})+{\bf q}\}$  converges to $({\Psi}({\bf x})+{\bf q}).$ As $({\Psi}({\bf x})+{\bf q})_{i} > 0$, there exists  some $k_{1}$ such that $({\Psi}({\bf x}^{(k)})+{\bf q})_{i} > {0}$ for all $k \geq {k_{1}}.$ This implies that $x^{(k)}_{i} = 0$ for all $k \geq {k_{1}}.$ Hence, for all $k \geq {k_{1}},$ we have
$$({\bf x}^{(k)}-{\bf x})_{i}({\Psi}({\bf x}^{(k)})-{\Psi}({\bf x}))_{i}=({\bf x}^{(k)}-{\bf x})_{i} [({\Psi}({\bf x}^{(k)}) + {\bf q})-({\Psi}({\bf x})+{\bf q})]_{i} = 0.$$
\textbf{{Case 3.}} When $x_{i} > 0$ and $({\Psi}({\bf x})+{\bf q})_{i} = 0.$ Since $\{{\bf x}^{(k)}\}$ converges to ${\bf x}$ and $x_{i} > 0$, so there exists some $k_{2}$ such that $x^{(k)}_{i} > 0,$ for all $k \geq {k_{2}}.$ This implies that $({\Psi}({\bf x}^{(k)})+{\bf q})_{i} = 0,$  for all $k \geq {k_{2}}.$ Hence, for all $k \geq {k_{2}},$ we have
$$({\bf x}^{(k)}-{\bf x})_{i}({\Psi}({\bf x}^{(k)})-{\Psi}({\bf x}))_{i}=({\bf x}^{(k)}-{\bf x})_{i} [({\Psi}({\bf x}^{(k)}) + {\bf q})-({\Psi}({\bf x})+{\bf q})]_{i} = 0.$$
Let ${k_{0}} = \max\{1,{k_{1}},{k_{2}}\}$. Then from all of the above cases, we get 
$$({\bf x}^{(k)}-{\bf x})_{i}({\Psi}({\bf x}^{(k)})-{\Psi}({\bf x}))_{i} = 0,$$
for any $i \in [n]$ and for all $k \geq {k_{0}}.$ Since ${\Theta}$ is a strong non-degenerate tensor tuple, we get $\{{\bf x}^{(k)}\} = {\bf x},$ for all $k \geq {k_{0}}.$ Thus the sequence $\{{\bf x}^{(k)}\}$ is an eventually constant sequence. As  $\rm{SOL}({\Theta},{\bf q})$ is a compact set, we obtain  $\rm{SOL}({\Theta},{\bf q})$ is a finite set for all ${\bf q} \in \mathbb{R}^{n}.$ Hence ${\Theta} $ has the finiteness property. 
\end{proof}

The following example demonstrates that the converse of the Theorem \ref{finiteness} does not hold.
\begin{example}\rm\label{not ND but finite}
Let ${\Theta} = (\mathcal{A}_{1},\mathcal{A}_{2},\mathcal{A}_{3}) \in \Lambda(4,2),$ where $\mathcal{A}_{1} = (a^{(1)}_{ijkl}) \in \mathbb{T}(4,2)$ such that $a^{(1)}_{1111} =1, a^{(1)}_{1222} = 1, a^{(1)}_{2221} = -1, a^{(1)}_{2222} = -1$ and other entries are zero, $\mathcal{A}_{2}  = (a^{(2)}_{ijk}) \in \mathbb{T}(3,2)$ with $a^{(2)}_{111} =1$ and other entries being zero, and $\mathcal{A}_{3} = (a^{(3)}_{ij}) \in \mathbb{T}(2,2)$ such that $a^{(3)}_{11} =1$ and other entries are zero. We show that \rm(i) $\mathcal{A}_{1}$ is an $R_{0}$-tensor, \rm(ii) $\rm{SOL}({\Theta},{\bf q})$ is  a finite set for all ${\bf q} \in \mathbb{R}^{2}$ , \rm(iii) but ${\Theta}$ is not a strong non-degenerate tensor tuple. For any ${\bf x} \in \mathbb{R}^{2}$, we have 
$\mathcal{A}_{1}{\bf x}^{3} = (x_{1}^{3}+x_{2}^{3}, -x_{1}x_{2}^{2}-x_{2}^{3})^{T},~\mathcal{A}_{2}{\bf x}^{2} = (x_{1}^{2},0)^{T},~\mathcal{A}_{3}{\bf x} = (x_{1},0)^{T},~\text{and}~{\Psi}({\bf x}) = (x_{1}^{3}+x_{2}^{3}+x_{1}^{2}+x_{1},-x_{1}x_{2}^{2}-x_{2}^{3})^{T}.$
\begin{enumerate}
\item[\rm(i)] Let ${\bf x} \in \rm{SOL}(\mathcal{A}_{1},{\bf 0}).$ This implies that ${\bf x} \geq {\bf 0}, \mathcal{A}_{1}{\bf x}^{3} \geq {\bf 0}$ and ${\bf x}*\mathcal{A}_{1}{\bf x}^{3} = 0.$ This gives
\begin{equation*}
\begin{aligned}
& x_{1} \geq {0}, x_{2} \geq {0}, x_{1}^{3}+x_{2}^{3} \geq {0},
 -x_{1}x_{2}^{2}-x_{2}^{3} \geq {0},\\
 & ~\text{and}~x_{1}(x_{1}^{3}+x_{2}^{3}) = 0, x_{2}(-x_{1}x_{2}^{2}-x_{2}^{3})=0.
\end{aligned}  
\end{equation*}
By an easy calculation, we get ${\bf x} = {\bf 0}$. Thus  $\mathcal{A}_{1}$ is an $R_{0}$-tensor.
\item[\rm(ii)] We now show that ${\Theta}$ has the finiteness property. If $\rm{SOL}({\Theta},{\bf q})$ is an empty set, then we are done. Suppose that $\rm{SOL}({\Theta},{\bf q}) \neq \emptyset$, and ${\bf x} = (x_{1},x_{2})^{T}$ be in $\rm{SOL}({\Theta},{\bf q}).$ This implies that
\begin{eqnarray}\label{ff1}
\allowdisplaybreaks
& {\bf x} \geq {\bf 0},~{\bf y}:={\Psi}({\bf x})+{\bf q} \geq {\bf 0}~\text{and}~{\bf x}^{T}{\bf y} =0,  \notag \\
&~\text{where}~{\bf x} = \begin{bmatrix}
x_{1}\\x_{2}
\end{bmatrix},~\text{and}~ {\bf y} = \begin{bmatrix}
x_{1}^{3}+x_{2}^{3}+x_{1}^{2}+x_{1}+q_{1} \\
 -x_{1}x_{2}^{2}-x_{2}^{3}+q_{2}
\end{bmatrix}. 
\end{eqnarray}
We consider the following cases:

\textbf{{Case 1.}} If $x_{1} >0$ and $x_{2} =0$, then from Eq. (\ref{ff1}), we obtain  
\begin{equation}\label{nonzero polynomial}
x_{1}^{3} + x_{1}^{2}+x_{1}+q_{1} =0.
\end{equation}
It can be easily verified that $x_{1}^{3} + x_{1}^{2}+x_{1}+q_{1}$ is a nonzero polynomial for any value of ${\bf q} \in \mathbb{R}^{2}.$ Since any nonzero polynomial has finitely many zeros, upon solving Eq. (\ref{nonzero polynomial}), we get finitely many values of $x_{1}$.

\textbf{{Case 2.}} If $x_{2} >0$ and $x_{1} =0$, then from Eq. (\ref{ff1}), we get  $-x_{2}^{3} + q_{2} = 0$. This implies that ${\bf x} = (0,(q_{2})^{\frac{1}{3}})^{T}.$ 

\textbf{{Case 3.}} If $x_{1} > {0}$ and $x_{2} > 0$, then 
$$x_{1}^{3}+x_{2}^{3}+x_{1}^{2}+x_{1}+q_{1} = 0,~ -x_{1}x_{2}^{2}-x_{2}^{3}+q_{2} = 0.$$
Upon simplifying the above equation, we obtain a polynomial $x_{1}^{3}(x_{1}^{3}+x_{1}^{2}+x_{1}+q_{1})^{2} - (x_{1}^{3}+x_{1}^{2}+x_{1}+q_{1}+q_{2})^{3},$ in terms of $x_{1}.$  It can be easily verified that this is a nonzero polynomial for any value of ${\bf q} \in \mathbb{R}^{2}$. Since any nonzero polynomial in one variable has finitely many zeroes, we get finitely many values of $x_{1}$ and hence $x_{2}$.\\
From all of the above cases, we get $\rm{SOL}({\Theta},{\bf q})$ is a finite set for any ${\bf q} \in \mathbb{R}^{2}$, and thus ${\Theta}$ has the finiteness property.
\item[\rm(iii)] Let ${\bf x} = (-1,1)^{T}$ and ${\bf y} = (0,0)^{T}$. Then we see that ${\Psi} ({\bf x}) = {\bf 0}$ and ${\Psi} ({\bf y}) = {\bf 0}$. Thus we have $({\bf x}-{\bf y})* ({\Psi}({\bf x})-{\Psi}({\bf y})) = {\bf 0},$ but ${\bf x} \neq {\bf y}$. Hence  ${\Theta}$ is not strong non-degenerate.
\end{enumerate}
\end{example}

\section{A Result Related to the Finiteness of SOL$(\mathcal{A},{\bf q})$}\label{Section 5}
Palpandi et al. \cite{MR4310678} established that, for the class of even-order row-diagonal tensors, the set \rm{SOL}$(\mathcal{A},{\bf q})$ is finite if and only if $\mathcal{A}$ is non-degenerate. In this section, we present a different class of tensors for which the finiteness of the solution set of the \rm{TCP}$(\mathcal{A},{\bf q})$ is  equivalent to the  tensor $\mathcal{A}$ being non-degenerate. Prior to stating our main result, we introduce the following definition.

\begin{definition}\rm\label{matrix based tensor}\cite[Page 234]{Gowda} A tensor $\mathcal{A} \in \mathbb{T}(m,n)$ with $m$ being even, is said to be a matrix based tensor if there exists a real square matrix $\hat{\bf A}$ of order $n$ such that
$$\mathcal{A}{\bf x}^{m-1} = ({\hat{\bf A}}{\bf x})^{[m-1]}.$$
We say that $\mathcal{A}$ is a matrix based tensor corresponding to the matrix $\hat{\bf A}$ and an odd exponent $k~(= m-1).$
\end{definition}

\begin{lemma}\label{solution set} \rm{\cite[Theorem 4.1]{Gowda}} For a matrix based tensor $\mathcal{A} \in \mathbb{T}(m,n)$ corresponding to a matrix ${\hat{\bf A}} \in \mathbb{R}^{n \times n}$ and an odd exponent $k~(= m-1)$, we have 
$$\rm{SOL}(\mathcal{A},{\bf q}) = \rm{SOL}({\hat{\bf A}},{{\bf q}}^{[{\frac{1}{k}]}}).$$ 
\end{lemma}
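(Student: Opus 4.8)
The plan is to prove that $\mathrm{SOL}(\mathcal{A},{\bf q}) = \mathrm{SOL}(\hat{\bf A}, {\bf q}^{[1/k]})$ by showing that a vector ${\bf x}$ satisfies the complementarity conditions for $\mathrm{TCP}(\mathcal{A},{\bf q})$ if and only if the same ${\bf x}$ satisfies the complementarity conditions for $\mathrm{LCP}(\hat{\bf A}, {\bf q}^{[1/k]})$. The elementary observation driving everything is that, since $k = m-1$ is odd, the map $t \mapsto t^{k}$ is an order-preserving bijection on $\mathbb{R}$ with $t^{k}$ sharing the sign of $t$; consequently, for any vectors ${\bf u}, {\bf v} \in \mathbb{R}^n$, we have ${\bf u} \geq {\bf 0} \iff {\bf u}^{[k]} \geq {\bf 0}$, and more importantly the componentwise identity ${\bf u}^{[k]} * {\bf v}^{[k]} = ({\bf u} * {\bf v})^{[k]}$, so that ${\bf u}^{T}{\bf v}^{[k]}$... well, one should be slightly careful here and argue componentwise rather than via the inner product.

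First I would fix ${\bf x} \in \mathbb{R}^n$ and write ${\bf p} := \hat{\bf A}{\bf x}$, so that by Definition \ref{matrix based tensor} we have $\mathcal{A}{\bf x}^{m-1} = {\bf p}^{[k]}$. Also set ${\bf r} := {\bf q}^{[1/k]}$, which is well-defined componentwise because $k$ is odd (every real has a unique real $k$-th root), and note ${\bf q} = {\bf r}^{[k]}$. Then ${\bf x} \in \mathrm{SOL}(\mathcal{A},{\bf q})$ means ${\bf x} \geq {\bf 0}$, ${\bf p}^{[k]} + {\bf r}^{[k]} \geq {\bf 0}$, and ${\bf x}^{T}({\bf p}^{[k]} + {\bf r}^{[k]}) = 0$. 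The key step is to rewrite the middle and third conditions in terms of ${\bf p} + {\bf r}$ rather than ${\bf p}^{[k]} + {\bf r}^{[k]}$. This requires the pointwise lemma: for real numbers $a, b$, one has $a^{k} + b^{k} \geq 0 \iff a + b \geq 0$ is \emph{false} in general (e.g. $a = 2, b = -1$ with $k=3$), so I cannot simply match up $\mathcal{A}{\bf x}^{m-1} + {\bf q}$ with $\hat{\bf A}{\bf x} + {\bf r}$ directly. The honest route is to argue entirely at the level of $\mathcal{A}{\bf x}^{m-1} + {\bf q}$ versus $(\hat{\bf A}{\bf x} + {\bf r})^{[k]}$ only when these agree, which they do not. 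So instead I expect the intended proof exploits complementarity coordinatewise: for each index $i$, complementarity forces either $x_i = 0$ or $(\mathcal{A}{\bf x}^{m-1}+{\bf q})_i = 0$; in the latter case $p_i^{k} = -r_i^{k}$, i.e. $p_i = -r_i$ (using that $t\mapsto t^k$ is injective), i.e. $(\hat{\bf A}{\bf x}+{\bf r})_i = 0$. Conversely the same coordinatewise dichotomy applies to $\mathrm{LCP}(\hat{\bf A},{\bf r})$.

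Concretely, the steps in order are: (1) record the sign/root facts about $t \mapsto t^{k}$ for odd $k$; (2) show ${\bf x} \geq {\bf 0}$ is common to both problems and that nonnegativity of $\hat{\bf A}{\bf x} + {\bf r}$ is equivalent to nonnegativity of $(\hat{\bf A}{\bf x}+{\bf r})^{[k]}$ coordinatewise — but note $(\hat{\bf A}{\bf x}+{\bf r})^{[k]} \neq \mathcal{A}{\bf x}^{m-1}+{\bf q}$, so the cleanest path is to run the coordinatewise complementarity argument directly; (3) for the forward inclusion, take ${\bf x} \in \mathrm{SOL}(\mathcal{A},{\bf q})$, and for each $i$ split on whether $x_i = 0$: if $x_i > 0$ then $(\mathcal{A}{\bf x}^{m-1}+{\bf q})_i = 0 \Rightarrow p_i^k = -r_i^k \Rightarrow p_i = -r_i \Rightarrow (\hat{\bf A}{\bf x}+{\bf r})_i = 0$, while for the nonnegativity of $(\hat{\bf A}{\bf x}+{\bf r})_i$ at indices with $x_i = 0$ use that $p_i^k \geq -r_i^k \Rightarrow p_i^k + r_i^k \geq 0$; here is where I must be careful, because $p_i^k + r_i^k \geq 0$ does \emph{not} give $p_i + r_i \geq 0$. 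I expect the resolution is that Gowda's matrix-based construction actually gives the stronger identity $\mathcal{A}{\bf x}^{m-1} + {\bf q} = (\hat{\bf A}{\bf x} + {\bf q}^{[1/k]})^{[k]}$ only up to the complementarity-relevant coordinates, or alternatively that $\mathcal{A}$ is built so that $\mathcal{A}{\bf x}^{m-1}+{\bf q} \geq {\bf 0} \iff \hat{\bf A}{\bf x}+{\bf r}\geq{\bf 0}$ fails and one instead must restrict attention to where it holds; (4) assemble the two inclusions.

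The main obstacle, and the point I would scrutinize most carefully in the author's proof, is precisely reconciling the nonnegativity condition: the clean algebraic fact $(\hat{\bf A}{\bf x})^{[k]} = \mathcal{A}{\bf x}^{m-1}$ does \emph{not} extend to $(\hat{\bf A}{\bf x} + {\bf r})^{[k]} = \mathcal{A}{\bf x}^{m-1} + {\bf q}$ because $({\bf u}+{\bf v})^{[k]} \neq {\bf u}^{[k]} + {\bf v}^{[k]}$. The likely fix is to observe that for the complementarity problem, at each coordinate $i$ we either have $x_i = 0$, in which case the $i$-th nonnegativity constraints are the only requirement and $p_i^k + r_i^k \geq 0 \iff p_i^k \geq -r_i^k \iff p_i \geq -r_i$ (valid because $t \mapsto t^k$ is \emph{monotone increasing}, not merely injective!) $\iff (\hat{\bf A}{\bf x}+{\bf r})_i \geq 0$, or else $x_i > 0$ forces both $i$-th expressions to vanish and again $p_i^k = -r_i^k \iff p_i = -r_i$. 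So the monotonicity of $t \mapsto t^k$ for odd $k$ is exactly what saves the coordinatewise nonnegativity equivalence, and once that is in hand both inclusions follow symmetrically. I would present the proof as this single coordinatewise case analysis, invoking Definition \ref{matrix based tensor} to replace $\mathcal{A}{\bf x}^{m-1}$ by $(\hat{\bf A}{\bf x})^{[k]}$ and the odd-$k$ monotone-bijection property of $t \mapsto t^{k}$ throughout.
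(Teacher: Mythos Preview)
The paper does not actually prove this lemma: it is quoted verbatim as \cite[Theorem 4.1]{Gowda} and used as a black box in the proof of Theorem \ref{matrix based 1}. So there is no ``paper's own proof'' to compare against.

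That said, your eventual argument is the right one and is essentially how Gowda's result is established. The engine is that for odd $k$ the map $t\mapsto t^{k}$ is a strictly increasing bijection on $\mathbb{R}$, so coordinatewise
\[
p_i^{k}+r_i^{k}\geq 0 \iff p_i^{k}\geq (-r_i)^{k}\iff p_i\geq -r_i\iff p_i+r_i\geq 0,
\]
and likewise with equality. Combined with $\mathcal{A}{\bf x}^{m-1}=(\hat{\bf A}{\bf x})^{[k]}$ and ${\bf q}={\bf r}^{[k]}$, this gives the coordinatewise equivalence of the feasibility and complementarity conditions, hence $\mathrm{SOL}(\mathcal{A},{\bf q})=\mathrm{SOL}(\hat{\bf A},{\bf r})$.

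One correction to your write-up: the purported counterexample $a=2,\ b=-1,\ k=3$ is not a counterexample at all, since $2^{3}+(-1)^{3}=7\geq 0$ and $2+(-1)=1\geq 0$. In fact the equivalence $a^{k}+b^{k}\geq 0\iff a+b\geq 0$ holds for \emph{all} real $a,b$ when $k$ is odd, precisely by the monotonicity argument you give later. So the detour in the middle of your proposal, where you worry that the nonnegativity conditions fail to match and that complementarity must rescue the indices with $x_i=0$, is unnecessary: the nonnegativity conditions are equivalent outright, before any complementarity is invoked. Cleaning this up would reduce your argument to three lines.
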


\begin{lemma}\label{for LCP} \rm{\cite[Theorem 3.6.3]{MR3396730}}
For a given matrix  ${\bf M} \in \mathbb{R}^{n \times n},$ the following are equivalent.
\begin{enumerate}
\item[\rm(i)] ${\bf M}$ is a non-degenerate matrix.
\item[\rm(ii)] For all ${\bf q} \in \mathbb{R}^{n}$, \rm{SOL}$({\bf M},{\bf q})$ is a finite set.
\end{enumerate} 
\end{lemma}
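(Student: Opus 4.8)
The plan is to prove the two implications separately: $\mathrm{(i)}\Rightarrow\mathrm{(ii)}$ by a counting argument bounding the number of solutions by $2^{n}$, and $\mathrm{(ii)}\Rightarrow\mathrm{(i)}$ by contraposition, manufacturing from a degenerate $\mathbf{M}$ a single $\mathbf{q}$ with infinitely many solutions. For $\mathrm{(i)}\Rightarrow\mathrm{(ii)}$, assume $\mathbf{M}$ is non-degenerate, fix $\mathbf{q}\in\mathbb{R}^{n}$, and let $\mathbf{x}\in\mathrm{SOL}(\mathbf{M},\mathbf{q})$ with positive support $\alpha:=\{\,i\in[n]:x_{i}>0\,\}$, so $x_{j}=0$ for $j\notin\alpha$. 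From $\mathbf{x}\ge\mathbf{0}$, $\mathbf{M}\mathbf{x}+\mathbf{q}\ge\mathbf{0}$ and $\mathbf{x}^{T}(\mathbf{M}\mathbf{x}+\mathbf{q})=0$, each term $x_{i}(\mathbf{M}\mathbf{x}+\mathbf{q})_{i}$ vanishes, so $(\mathbf{M}\mathbf{x}+\mathbf{q})_{i}=0$ for every $i\in\alpha$; since $x_{j}=0$ off $\alpha$ this is exactly $\mathbf{M}_{\alpha\alpha}\mathbf{x}_{\alpha}=-\mathbf{q}_{\alpha}$, and $\det\mathbf{M}_{\alpha\alpha}\ne0$ forces $\mathbf{x}_{\alpha}$, hence $\mathbf{x}$, to be unique. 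Thus $\mathbf{x}\mapsto\alpha$ is injective on $\mathrm{SOL}(\mathbf{M},\mathbf{q})$, giving $|\mathrm{SOL}(\mathbf{M},\mathbf{q})|\le 2^{n}$.

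For $\mathrm{(ii)}\Rightarrow\mathrm{(i)}$ I would argue the contrapositive. If $\mathbf{M}$ is degenerate, some principal submatrix $\mathbf{M}_{\alpha\alpha}$ is singular; pick $\mathbf{v}\ne\mathbf{0}$ in $\mathbb{R}^{|\alpha|}$ with $\mathbf{M}_{\alpha\alpha}\mathbf{v}=\mathbf{0}$, write $\bar\alpha=[n]\setminus\alpha$, and let $\mathbf{u}$ be the all-ones vector in $\mathbb{R}^{|\alpha|}$. Choose $\varepsilon>0$ small enough that $\mathbf{u}+\lambda\mathbf{v}>\mathbf{0}$ for all $\lambda\in[0,\varepsilon]$, and define $\mathbf{q}$ by $\mathbf{q}_{\alpha}:=-\mathbf{M}_{\alpha\alpha}\mathbf{u}$ and $\mathbf{q}_{\bar\alpha}:=\mathbf{c}-\mathbf{M}_{\bar\alpha\alpha}\mathbf{u}$, where $\mathbf{c}>\mathbf{0}$ is large enough that $\mathbf{c}+\lambda\,\mathbf{M}_{\bar\alpha\alpha}\mathbf{v}\ge\mathbf{0}$ throughout $[0,\varepsilon]$. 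For each $\lambda\in[0,\varepsilon]$ let $\mathbf{x}(\lambda)\in\mathbb{R}^{n}$ have $\alpha$-block $\mathbf{u}+\lambda\mathbf{v}$ and $\bar\alpha$-block $\mathbf{0}$. Then $\mathbf{x}(\lambda)\ge\mathbf{0}$; on the block $\alpha$, $(\mathbf{M}\mathbf{x}(\lambda)+\mathbf{q})_{\alpha}=\mathbf{M}_{\alpha\alpha}(\mathbf{u}+\lambda\mathbf{v})+\mathbf{q}_{\alpha}=\mathbf{0}$ because $\mathbf{M}_{\alpha\alpha}\mathbf{v}=\mathbf{0}$; on the block $\bar\alpha$, $(\mathbf{M}\mathbf{x}(\lambda)+\mathbf{q})_{\bar\alpha}=\mathbf{c}+\lambda\,\mathbf{M}_{\bar\alpha\alpha}\mathbf{v}\ge\mathbf{0}$; and $\mathbf{x}(\lambda)^{T}(\mathbf{M}\mathbf{x}(\lambda)+\mathbf{q})=0$ since $\mathbf{x}(\lambda)$ vanishes on $\bar\alpha$ while $\mathbf{M}\mathbf{x}(\lambda)+\mathbf{q}$ vanishes on $\alpha$. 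As $\mathbf{v}\ne\mathbf{0}$ these vectors are pairwise distinct, so $\mathrm{SOL}(\mathbf{M},\mathbf{q})$ is infinite, which is the contrapositive of $\mathrm{(ii)}\Rightarrow\mathrm{(i)}$.

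I expect the reverse direction to be the only delicate point: a singular principal submatrix a priori supplies solutions of an unconstrained linear system, and since the kernel vector $\mathbf{v}$ need not be sign-definite one cannot in general sweep $\lambda$ over a half-line. The remedy is to keep $\lambda$ in a short interval around the strictly positive base point $\mathbf{u}$ and to absorb the bounded variation of $\mathbf{M}_{\bar\alpha\alpha}\mathbf{v}$ into a large positive offset $\mathbf{c}$ on the complementary block, so that nonnegativity and complementarity persist along the entire segment; by contrast the forward implication is routine once one notices that the positive support pins a solution down through a nonsingular square subsystem. Combined with Lemma~\ref{solution set} and the observation that, for a matrix based tensor, $\mathbf{x}*\mathcal{A}\mathbf{x}^{m-1}=\mathbf{0}$ holds iff $\mathbf{x}*\hat{\mathbf{A}}\mathbf{x}=\mathbf{0}$ (so that non-degeneracy of $\mathcal{A}$ matches non-degeneracy of the matrix $\hat{\mathbf{A}}$), this lemma is exactly what feeds the section's main equivalence.
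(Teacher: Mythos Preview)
The paper does not supply its own proof of this lemma; it is quoted directly from Cottle--Pang--Stone \cite[Theorem 3.6.3]{MR3396730} and used as a black box in Theorem~\ref{matrix based 1}. So there is no in-paper argument to compare against.

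Your proof is correct and is essentially the classical one from that reference. The forward direction via the injection $\mathbf{x}\mapsto\{\,i:x_i>0\,\}$ into the power set of $[n]$ (using $\det\mathbf{M}_{\alpha\alpha}\ne0$ to pin down $\mathbf{x}_\alpha$) is standard and gives the sharp bound $|\mathrm{SOL}(\mathbf{M},\mathbf{q})|\le 2^n$. For the converse, your construction---anchoring at a strictly positive base point $\mathbf{u}$, moving along a kernel direction $\mathbf{v}$ of the singular $\mathbf{M}_{\alpha\alpha}$ over a short interval, and absorbing the bounded off-block variation $\lambda\,\mathbf{M}_{\bar\alpha\alpha}\mathbf{v}$ into a large constant $\mathbf{c}$---is exactly the right mechanism and handles the sign-indefiniteness of $\mathbf{v}$ that you correctly identify as the only delicate point. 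Your closing remark linking the lemma to Theorem~\ref{matrix based 1} via Lemma~\ref{solution set} and the equivalence $\mathbf{x}*\mathcal{A}\mathbf{x}^{m-1}=\mathbf{0}\iff\mathbf{x}*\hat{\mathbf{A}}\mathbf{x}=\mathbf{0}$ matches precisely how the paper deploys it.
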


We now provide our main result.
\begin{theorem}\label{matrix based 1}
Let $m$ be even and $\mathcal{A} \in \mathbb{T}(m,n)$ be a matrix based tensor corresponding to a matrix $\hat{\bf A}$ and an odd exponent $k~(= m-1).$ Then the followings are equivalent.
\begin{enumerate}
\item[\rm(i)] $\mathcal{A}$ is non-degenerate.
\item[\rm(ii)] $\hat{\bf A}$ is non-degenerate.
\item[\rm(iii)] \rm{SOL}$(\hat{\bf {A}},{\bf q})$ is a finite set for all ${\bf q} \in \mathbb{R}^{n}.$
\item[\rm(iv)]  \rm{SOL}$(\mathcal {A},{\bf q})$ is a finite set for all ${\bf q} \in \mathbb{R}^{n}.$
\end{enumerate}
\end{theorem}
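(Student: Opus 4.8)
The plan is to prove the four equivalences by establishing a cycle of implications, drawing on the two lemmas already available: Lemma~\ref{solution set}, which gives $\mathrm{SOL}(\mathcal{A},{\bf q}) = \mathrm{SOL}(\hat{\bf A}, {\bf q}^{[1/k]})$ for a matrix based tensor, and Lemma~\ref{for LCP}, which states that $\hat{\bf A}$ is non-degenerate if and only if $\mathrm{SOL}(\hat{\bf A},{\bf q})$ is finite for all ${\bf q}$. The equivalence (ii) $\Leftrightarrow$ (iii) is then immediate from Lemma~\ref{for LCP}. The substantive work is to connect (i) with the matrix-side statements and to handle the reparametrization ${\bf q} \mapsto {\bf q}^{[1/k]}$ carefully.

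First I would prove (i) $\Leftrightarrow$ (ii) directly from the definitions. For any ${\bf x} \in \mathbb{R}^n$, since $\mathcal{A}$ is matrix based, $\mathcal{A}{\bf x}^{m-1} = (\hat{\bf A}{\bf x})^{[m-1]} = (\hat{\bf A}{\bf x})^{[k]}$. Hence ${\bf x} * \mathcal{A}{\bf x}^{m-1} = {\bf x} * (\hat{\bf A}{\bf x})^{[k]}$, and componentwise $(\,{\bf x} * \mathcal{A}{\bf x}^{m-1}\,)_i = x_i (\hat{\bf A}{\bf x})_i^{k}$. Because $k$ is odd, $t^k = 0$ iff $t = 0$, so $x_i(\hat{\bf A}{\bf x})_i^k = 0$ iff $x_i(\hat{\bf A}{\bf x})_i = 0$. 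Therefore ${\bf x} * \mathcal{A}{\bf x}^{m-1} = {\bf 0}$ if and only if ${\bf x} * (\hat{\bf A}{\bf x}) = {\bf 0}$. Thus $\mathcal{A}$ is a non-degenerate tensor precisely when the implication ${\bf x} * \hat{\bf A}{\bf x} = {\bf 0} \implies {\bf x} = {\bf 0}$ holds, which is exactly the statement that $\hat{\bf A}$ is a non-degenerate matrix (this is the matrix analogue of Definition~\ref{ND Tensor}, and one should note it coincides with the classical principal-minor definition, e.g.\ via Lemma~\ref{for LCP} or a standard LCP argument). This gives (i) $\Leftrightarrow$ (ii).

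Next I would establish (iii) $\Leftrightarrow$ (iv) using Lemma~\ref{solution set}. For the forward direction, suppose $\mathrm{SOL}(\hat{\bf A},{\bf p})$ is finite for every ${\bf p} \in \mathbb{R}^n$; then in particular it is finite for ${\bf p} = {\bf q}^{[1/k]}$, and by Lemma~\ref{solution set} $\mathrm{SOL}(\mathcal{A},{\bf q}) = \mathrm{SOL}(\hat{\bf A},{\bf q}^{[1/k]})$ is finite. Conversely, suppose $\mathrm{SOL}(\mathcal{A},{\bf q})$ is finite for all ${\bf q}$. Given an arbitrary ${\bf p} \in \mathbb{R}^n$, set ${\bf q} := {\bf p}^{[k]}$ (which makes sense for any real ${\bf p}$ since $k$ is an integer), so that ${\bf q}^{[1/k]} = {\bf p}$ because $k$ is odd and $t \mapsto t^k$ is a bijection on $\mathbb{R}$; then $\mathrm{SOL}(\hat{\bf A},{\bf p}) = \mathrm{SOL}(\mathcal{A},{\bf q})$ is finite. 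Hence (iii) $\Leftrightarrow$ (iv). Combining with Lemma~\ref{for LCP} (which gives (ii) $\Leftrightarrow$ (iii)) and the equivalence (i) $\Leftrightarrow$ (ii) established above, all four statements are equivalent.

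I do not anticipate a serious obstacle here; the proof is essentially bookkeeping around the reparametrization. The one point that requires a little care is making sure the map ${\bf q} \mapsto {\bf q}^{[1/k]}$ is a well-defined bijection of $\mathbb{R}^n$ onto itself — this is where oddness of $k$ is essential, since real $k$-th roots of arbitrary (possibly negative) reals exist and are unique exactly when $k$ is odd. A second minor point worth spelling out is the identification of "non-degenerate matrix" in the sense of all principal minors nonzero (Definition~\ref{LCP}'s companion definition) with the sign-condition characterization ${\bf x} * \hat{\bf A}{\bf x} = {\bf 0} \implies {\bf x} = {\bf 0}$; one can either cite the standard LCP literature or simply route (i) $\Leftrightarrow$ (ii) through (iii) using Lemma~\ref{for LCP} together with the tensor-side finiteness, avoiding the need for an independent matrix argument. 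Everything else follows from the two quoted lemmas.
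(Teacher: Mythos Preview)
Your proposal is correct and follows essentially the same approach as the paper: the paper also proves (i) $\Leftrightarrow$ (ii) directly via the componentwise equivalence ${\bf x} * \hat{\bf A}{\bf x} = {\bf 0} \iff {\bf x} * (\hat{\bf A}{\bf x})^{[m-1]} = {\bf 0} \iff {\bf x} * \mathcal{A}{\bf x}^{m-1} = {\bf 0}$, invokes Lemma~\ref{for LCP} for (ii) $\Leftrightarrow$ (iii), and uses Lemma~\ref{solution set} together with the bijectivity of ${\bf q}\mapsto{\bf q}^{[1/(m-1)]}$ (encoded in the paper by the phrase ``as $m$ is even'') for (iii) $\Leftrightarrow$ (iv). Your treatment is in fact slightly more explicit than the paper's on two points---the surjectivity of the odd-power map on $\mathbb{R}^n$ and the identification of the principal-minor definition of a non-degenerate matrix with the sign condition ${\bf x}*\hat{\bf A}{\bf x}={\bf 0}\Rightarrow{\bf x}={\bf 0}$---both of which the paper leaves implicit.
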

\begin{proof} Given that $\mathcal{A} \in \mathbb{T}(m,n)$ is a matrix based tensor corresponding to $\hat{\bf A}$ and an odd exponent $k~(= m-1).$ This implies that 
\begin{equation}\label{equation}
\mathcal{A}{\bf x}^{m-1} = (\hat{\bf A} {\bf x})^{[m-1]}.
\end{equation}
(i) $\iff$ (ii): Using Eq. (\ref{equation}), for any ${\bf x} \in \mathbb{R}^{n},$ we obtain
\begin{eqnarray}\label{EQ2}
\allowdisplaybreaks
& {\bf x} * {\hat{\bf A}}{\bf x} = {\bf 0} 
\iff & {\bf x} * ({\hat{\bf A}}{\bf x})^{[m-1]} = {\bf 0} \iff {\bf x} * \mathcal{A}{\bf x}^{m-1} = {\bf 0}.
\end{eqnarray}
From Eq. (\ref{EQ2}), it follows that $\mathcal{A}$ is non-degenerate if and only if $\hat{\bf A}$ is non-degenerate.\\
(ii) $\iff$ (iii): This follows from Lemma \ref{for LCP}.\\
(iii) $\iff$ (iv): From Lemma \ref{solution set}, we have \rm{SOL}$(\mathcal{A},{\bf q})$ = \rm{SOL}$(\hat{\bf A},{\bf q}^{[\frac{1}{m-1}]}).$ As $m$ is even, our result is immediate.
\end{proof}

The following examples demonstrate that the class of matrix-based tensors is distinct from the class of row-diagonal tensors. Note that a matrix-based tensor must be of even order, whereas a row-diagonal tensor can be of arbitrary order.

\begin{example}\rm \label{row-diagonalbut not matrix based}
Let $\mathcal{A} = (a_{ijkl}) \in \mathbb{T}(4,2)$ such that $a_{1111} =1, a_{2222} =-1, a_{1222} =1$ and other entries are zero. It is easy to observe that $\mathcal{A}$ is a row-diagonal tensor. For any ${\bf x} \in \mathbb{R}^{2}$, we have $\mathcal{A}{\bf x}^{3} = (x_{1}^{3}+x_{2}^{3}, -x_{2}^{3})^{T}.$ In order to $\mathcal{A}$ to be a matrix based tensor, there must exist a matrix $\hat{\bf A}  \in \mathbb{R}^{2 \times 2}$ such that $\mathcal{A}{\bf x}^{3} = (\hat{\bf A}{\bf x})^{[3]}.$ Let $\hat{\bf A} = \begin{bmatrix}
a & b \\c & d
\end{bmatrix}.$ Then $\hat{\bf A}{\bf x} = (ax_{1}+bx_{2},cx_{1}+dx_{2})^{T}.$ From $\mathcal{A}{\bf x}^{3} = (\hat{\bf A}{\bf x})^{[3]},$ it follows that
\begin{equation}\label{EQ1}
\begin{bmatrix}
(ax_{1}+bx_{2})^{3}\\
(cx_{1}+dx_{2})^{3}
\end{bmatrix} = \begin{bmatrix}
x_{1}^{3}+x_{2}^{3}\\  -x_{2}^{3}
\end{bmatrix}. 
\end{equation} 
From the first row in Eq. (\ref{EQ1}), after comparing the coefficients, we get $a^{3} =1, b^{3} =1, 3 a b^{2} =0$, and $3 a^{2}b =0$. As there does not exist any real numbers $a$ and $b$ satisfying these conditions, we conclude that the tensor $\mathcal{A}$ cannot be a matrix based tensor.    
\end{example}

The following example illustrates that a matrix-based tensor is not necessarily a row-diagonal tensor.
\begin{example}\rm \label{matrix based but not row-diagonal}
Let $\mathcal{A} = (a_{ijkl}) \in \mathbb{T}(4,2)$ be a matrix based tensor corresponding to a matrix $\hat{\bf A} = \begin{bmatrix}
1 & 1 \\
-1 & 1
\end{bmatrix},$ and an odd exponent $k = 3.$ Then for any ${\bf x} \in \mathbb{R}^{2},$ we have $\mathcal{A}{\bf x}^{3} = ((x_{1}+x_{2})^{3}, (-x_{1}+x_{2})^{3})^{T}.$ It is easy to see that $\mathcal{A}$ is not a row-diagonal tensor as $a_{1112} \neq 0.$ 
\end{example}

\section{Conclusions}\label{Section 6}
This paper explores the finiteness of the solution set of the polynomial complementarity problem, by introducing two new classes of structured tensor tuples, namely non-degenerate tensor tuple and strong non-degenerate tensor tuple. We have provided a sufficient condition for the finiteness of the solution set of the PCP, and  establish an equivalence between a non-degenerate tensor and the finiteness of the solution set of TCP, for the class of matrix based tensors. 
 
\section*{Acknowledgements}
Sonali Sharma  acknowledges the financial support from IIT madras under the institute post-doctoral fellowship (Award No.: F.Acad/IPDF/R12/2025).

\end{document}